\theoremstyle{plain}
\newtheorem*{thm*}{Theorem}
\newtheorem{thm}{Theorem}[section]
\newtheorem{prop}[thm]{Proposition}
\newtheorem{lem}[thm]{Lemma}
\newtheorem{cor}[thm]{Corollary}
\theoremstyle{definition}
\newtheorem{dfn}[thm]{Definition}
\newtheorem{rem}[thm]{Remark}
\newcommand{\beq}{\begin{equation}}
\newcommand{\eeq}{\end{equation}}
\newcommand{\refnum}[2]{\ref{#1}\eqref{#2}}
\newcommand{\ZZ}{\mathbb{Z}}
\newcommand{\NN}{\mathbb{N}}
\newcommand{\QQ}{\mathbb{Q}}
\newcommand{\kk}{\Bbbk}
\newcommand{\g}{\mathfrak{g}}
\newcommand{\h}{\mathfrak{h}}
\DeclareMathOperator{\ad}{ad}
\newcommand{\mf}{\mathfrak}
\newcommand{\nonzero}{\setminus \{0\}}
\DeclareMathOperator{\im}{Im}
\DeclareMathOperator{\Ann}{Ann}
\DeclareMathOperator{\Kdim}{Kdim}
\DeclareMathOperator{\spn}{span}
\DeclareMathOperator{\Der}{Der}
\DeclareMathOperator{\Inn}{Inn}
\DeclareMathOperator{\ab}{ab}
\newcommand{\inj}{\hookrightarrow}
\newcommand{\surj}{\twoheadrightarrow}
\newcommand{\ideal}{\unlhd}
\newcommand\restr[2]{{
  \left.\kern-\nulldelimiterspace % automatically resize the bar with \right
  #1 % the function
  \vphantom{\big|}
  \right|_{#2}
  }}
\title[Enveloping algebras of derivations]{Enveloping algebras of derivations of commutative and noncommutative algebras}
\author{Jason Bell}
\author{Lucas Buzaglo}
\date{}
\keywords{Infinite-dimensional Lie algebra, derivations, universal enveloping algebra, non-noetherian}
\subjclass[2020]{17B35, 16W25, 16P40 (Primary), 17B65 (Secondary)}
\begin{document}

\begin{abstract}
    Let $\kk$ be a field of characteristic zero. Motivated by the fundamental question of whether it is possible for the universal enveloping algebra of an infinite-dimensional Lie algebra to be noetherian, we study Lie algebras of derivations of associative algebras. The main result of this paper is that the universal enveloping algebra of the Lie algebra of derivations of a finitely generated $\kk$-algebra is not noetherian. This extends a result of Sierra and Walton on the Witt algebra, as well as a result of the second author on Krichever--Novikov algebras. We highlight that the result applies to derivations of both commutative and noncommutative algebras without restriction on their growth.
\end{abstract}

\maketitle

\section{Introduction}

Throughout, we let $\kk$ be a field of characteristic zero. For brevity, we say that a ring is \emph{noetherian} if it is both left and right noetherian.

Although enveloping algebras of finite-dimensional Lie algebras are fundamental examples of well-behaved noncommutative rings, those of infinite-dimensional Lie algebras remain mysterious. It has only been within the last ten years that we have begun to understand their ring theoretic properties \cite{SierraWalton, PenkovPetukhov, BiswalSierra, BellBuzaglo, AndruskiewitschMathieu}. One of the most important open problems regarding enveloping algebras of infinite-dimensional Lie algebras is the question of noetherianity: it is not known whether it is possible for such an algebra to be noetherian. This is a famously difficult question which has been posed by many authors \cite{GoodearlWarfield, Brown, Goodearl, BrownGilmartin, Andruskiewitsch}, but its earliest appearance was fifty years ago in Amayo and Stewart's book on infinite-dimensional Lie algebras \cite[Question 27]{AmayoStewart}.

Many of the most significant examples of infinite-dimensional Lie algebras arise as derivations of associative algebras. For example, the Virasoro algebra, a hugely important infinite-dimensional Lie algebra in both mathematics and physics, is the universal central extension of the Witt algebra $W = \Der(\kk[t,t^{-1}])$. Sierra and Walton proved that the enveloping algebras of the Witt and Virasoro algebras are not noetherian \cite[Theorem 0.5, Corollary 0.6]{SierraWalton}. The second author then extended the result, proving that the enveloping algebra of derivations of a finitely generated commutative domain of Krull dimension 1 is not noetherian \cite{Buzaglo}.

The present paper is devoted to further extending this result to include derivations of algebras which are not necessarily commutative or domains, with no restriction on their (Krull or Gelfand--Kirillov) dimension.

\begin{thm}[Theorem \ref{thm:main}]\label{thm:intro main}
    Let $A$ be an infinite-dimensional finitely generated $\kk$-algebra. Then $U(\Der(A))$ is not noetherian. Furthermore, $U(\Inn(A))$ is not noetherian whenever $\Inn(A)$ is infinite-dimensional.
\end{thm}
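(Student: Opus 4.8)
The plan is to reduce the general (possibly noncommutative) case to a situation resembling the Witt algebra, then apply a ``subalgebra trick'': exhibit inside $\Der(A)$ (or $\Inn(A)$) a Lie subalgebra $L$ that maps onto a Lie algebra whose enveloping algebra is already known to be non-noetherian, and argue that non-noetherianity is inherited. More precisely, my first step would be to find a single derivation $\delta \in \Der(A)$ that behaves like the Euler operator $t\frac{d}{dt}$, in the sense that $A$ decomposes (or at least contains a large graded piece) as a sum of $\delta$-eigenspaces with integer eigenvalues that are unbounded above. Since $A$ is infinite-dimensional and finitely generated, one can pass to a suitable subquotient or localization to arrange such a grading; the key point is that the multiplication in $A$ together with $\delta$ produces, via $\ad$, derivations that shift degree, and these span (a copy of, or a quotient of) something like $\W = W_{\geq -1}$ or a positive-part Witt-type algebra.

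Once such a copy of a Witt-type Lie algebra $\mathfrak{w}$ sits inside $\Der(A)$, the strategy is to invoke the mechanism behind Sierra--Walton and Buzaglo: inside $U(\mathfrak{w})$ there is a left ideal (for instance, generated by an infinite ascending chain built from the degree-shifting generators $e_n$ with relations $[e_i,e_j] = (j-i)e_{i+j}$) that is not finitely generated. The technical heart is transporting this from $U(\mathfrak{w})$ to $U(\Der(A))$. For this I would use that $U(\mathfrak{w}) \inj U(\Der(A))$ is a subalgebra over which $U(\Der(A))$ is — after a further reduction — free or at least faithfully flat as a one-sided module (this follows from the PBW theorem: choosing a basis of $\Der(A)$ extending a basis of $\mathfrak{w}$ makes $U(\Der(A))$ free as a right $U(\mathfrak{w})$-module). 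Faithful flatness then lets one pull back a non-finitely-generated left ideal of $U(\mathfrak{w})$ to a non-finitely-generated left ideal of $U(\Der(A))$, and the same argument on the other side handles right noetherianity.

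For the $\Inn(A)$ statement, the point is that $\Inn(A) = \{\ad_a : a \in A\} \cong A/Z(A)$ as a Lie algebra, and when this is infinite-dimensional one can run the same argument: the commutator structure of $A$ supplies, inside $\Inn(A)$, enough degree-shifting elements to build a Witt-type subalgebra (or, failing a full grading, an infinite-dimensional subalgebra with the combinatorial features that force non-finitely-generated one-sided ideals in its enveloping algebra), and again PBW-freeness lifts the obstruction to $U(\Inn(A))$.

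The main obstacle I anticipate is the first step in full generality: for an arbitrary finitely generated noncommutative $A$ with no growth hypothesis, it is not obvious that $\Der(A)$ or $\Inn(A)$ contains a Witt-type subalgebra at all. One likely needs a dichotomy — either $A$ has an element acting semisimply with unbounded integer eigenvalues (giving a Witt-type copy directly), or $A$ is ``locally finite'' in a way that forces $\Der(A)$ itself to be infinite-dimensional but built from nilpotent pieces, in which case a different infinite-dimensional Lie subalgebra (e.g. a copy of a Weyl-type or a free-nilpotent-type algebra) must be extracted and shown to have non-noetherian enveloping algebra. Reconciling these cases into one uniform statement, while only using finite generation of $A$, is where the real work lies; the PBW/faithful-flatness lifting is, by contrast, essentially formal.
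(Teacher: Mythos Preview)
Your lifting step is fine --- indeed simpler than you present it: by PBW, $U(\Der(A))$ is free as a right $U(\mathfrak{w})$-module for any Lie subalgebra $\mathfrak{w}$, so noetherianity of $U(\Der(A))$ would force noetherianity of $U(\mathfrak{w})$ (this is Proposition~\ref{prop:non-noetherian}\eqref{item:subalgebra} in the paper). No faithful flatness subtleties are needed.

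The genuine gap is exactly the one you flag: producing a Witt-type subalgebra inside $\Der(A)$ or $\Inn(A)$ for an arbitrary affine $A$. You offer no mechanism for this beyond the hope that a semisimple derivation with unbounded integer spectrum exists, and your fallback dichotomy (``locally finite'' forcing some other non-noetherian subalgebra) is not made precise. For a general noncommutative affine $A$ --- say a finitely generated nil algebra, or a PI algebra module-finite over its center --- there is no reason to expect any $\ZZ$-grading with the combinatorics needed to mimic $[e_i,e_j]=(j-i)e_{i+j}$. So as written the proposal is a strategy with its hardest step missing, not a proof.

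The paper avoids this problem entirely by \emph{not} looking for Witt-type subalgebras except in one base case. Instead it runs a case analysis keyed to how $A$ sits over $Z(A)$, and in each case exhibits a much softer obstruction drawn from Proposition~\ref{prop:non-noetherian}: an infinite-dimensional abelian subalgebra, an infinite-dimensional solvable subalgebra, an infinite strictly ascending chain of subalgebras, or a subalgebra with infinite-dimensional abelianization. For example, when $A$ is module-finite over $Z(A)$ with $\dim_\kk(A/Z(A))=\infty$, one of the cyclic $Z(A)$-submodules $Z(A)\cdot\ad_{x_i}\subseteq\Inn(A)$ is an infinite-dimensional \emph{abelian} Lie subalgebra; when $Z(A)$ is finite-dimensional, one combines Goldie/Artin--Wedderburn structure theory with the observation that nilpotent ideals give nilpotent (hence solvable) Lie subalgebras of $\Inn(A)$. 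The Sierra--Walton/Buzaglo input is invoked only for commutative domains of Krull dimension~$1$, reached via the map $\Der(A)\to\Der(A/P)$ for a minimal prime $P$; even for commutative $A$ of Krull dimension $\geq 2$ the paper bypasses Witt-type arguments with a direct infinite-abelianization trick. Your approach would, if it worked, give a more uniform proof, but as it stands the reduction to a Witt-type subalgebra is the whole difficulty and you have not supplied it.
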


We refer the reader to Definition \ref{def:derivation} for the definitions of $\Der(A)$ and $\Inn(A)$.

Theorem \ref{thm:intro main} provides one of the first general non-noetherianity results for enveloping algebras of Lie algebras that do not rely on either an explicit grading on the Lie algebra or its simplicity. Previous work by Sierra and Walton \cite{SierraWalton} on the enveloping algebra of the Witt algebra, as well as the later result by Andruskiewitsch and Mathieu \cite[Theorem 1.3]{AndruskiewitschMathieu}, which demonstrates that enveloping algebras of $\ZZ^n$-graded simple Lie algebras are non-noetherian, both heavily depend on the graded structure of the Lie algebra.

In contrast, there are very few examples of infinite-dimensional ungraded or non-simple Lie algebras whose enveloping algebras are known to be non-noetherian. The Lie algebras of derivations presented in Theorem \ref{thm:intro main} significantly expand this class, offering a wealth of new examples in this direction.

We remark that Theorem \ref{thm:intro main} is not true without the finitely generated and infinite-dimensional assumptions on $A$. For example, if $A = \overline{\QQ}$ and $\kk = \QQ$ then $A$ has no non-trivial $\kk$-linear derivations. Even when $\kk$ is algebraically closed there are counterexamples: see \cite[Corollary 4.5]{MukhopadhyaySmith} for a noetherian example over an arbitrary field.

The proof of Theorem \ref{thm:intro main} is achieved as follows: in the commutative case, we are able to reduce to the case where $A$ is a domain by considering the quotient $A/P$ for some minimal prime $P$ of $A$. Since derivations preserve minimal primes, we get a map
$$\varphi \colon \Der(A) \to \Der(A/P)$$
whose image can be shown to have a non-noetherian enveloping algebra. In the case where $A/P$ has Krull dimension 1, this follows by results from \cite{Buzaglo}. If $A/P$ has Krull dimension 2 or more, the proof is significantly easier. It then follows that $U(\Der(A))$ is not noetherian.

The proof of Theorem \ref{thm:intro main} in the noncommutative case presents more of a challenge, so it is split into several sub-cases.
\begin{enumerate}
    \item $A$ is a finitely generated module over $Z(A)$:
    \begin{enumerate}
        \item $\dim_\kk(A/Z(A)) < \infty$ (Proposition \ref{prop:finite over center}).\label{case:finite codimension center}
        \item $\dim_\kk(A/Z(A)) = \infty$ (Lemma \ref{lem:finite over center}).\label{case:finite over center}
    \end{enumerate}
    \item $A$ is not a finitely generated module over $Z(A)$:
    \begin{enumerate}
        \item $\dim_\kk(Z(A)) < \infty$ (Proposition \ref{prop:finite-dimensional center}).\label{case:finite-dimensional center}
        \item $\dim_\kk(Z(A)) = \infty$ (Proposition \ref{prop:not finite over center}).\label{case:not finite over center}
    \end{enumerate}
\end{enumerate}

In case \eqref{case:finite codimension center}, we are able to emulate the proof outlined above with the homomorphism $\Der(A) \to \Der(A/P)$, for some minimal prime $P$ of $A$. However, proving that the image of this homomorphism has a non-noetherian enveloping algebra requires a much more technical argument.

The remaining cases \eqref{case:finite over center}--\eqref{case:not finite over center} all have infinite-dimensional spaces of inner derivations, which we denote by $\Inn(A)$. The Lie algebraic properties of $\Inn(A)$ are closely related to ring theoretic properties of $A$, since $\Inn(A) \cong A/Z(A)$ (where we view $A$ as a Lie algebra with commutator bracket). We are thus able to prove that $U(\Inn(A))$ is not noetherian by exploiting classical results from noncommutative ring theory.

We finish the paper with the following consequence of Theorem \ref{thm:intro main}.

\begin{cor}[Corollary \ref{cor:associative}]
    Let $A$ be an infinite-dimensional $\kk$-algebra, and view $A$ as a Lie algebra with commutator bracket. Then $U(A)$ is not noetherian.
\end{cor}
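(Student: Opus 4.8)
The plan is to deduce the corollary from Theorem~\ref{thm:intro main} by reducing to the case of a finitely generated algebra. The issue is that $A$ itself need not be finitely generated, so Theorem~\ref{thm:intro main} does not apply directly; and even if it did, the theorem concerns $U(\Der(A))$ or $U(\Inn(A))$ for a \emph{finitely generated} $A$, whereas here we want a statement about $U(A)$ for the Lie algebra $A$ under its commutator bracket. The key observation is that $A$ under the commutator bracket, call it $A_{\mathrm{Lie}}$, maps onto $\Inn(B)$ for a suitable subalgebra $B\subseteq A$: indeed, for any subalgebra $B$, there is a Lie algebra homomorphism $A_{\mathrm{Lie}}\to\Der(B)$ given by $a\mapsto [a,-]$ whenever $[a,B]\subseteq B$, and in particular any subalgebra $B$ that is an ideal (or for which $\ad a$ preserves $B$) yields such a map; more to the point, there is always the restriction-type map relating inner derivations of $B$ to elements of $A$.

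First I would pick a finitely generated subalgebra $B$ of $A$ that is still infinite-dimensional: since $\dim_\kk A = \infty$, we may choose countably many elements $a_1, a_2, \dots$ of $A$ that are linearly independent modulo smaller subalgebras and let $B$ be the subalgebra they generate; a short argument ensures $B$ can be taken infinite-dimensional (e.g.\ if every finitely generated subalgebra were finite-dimensional, then $A$, being the directed union of these, would have the property that... — actually one must be slightly careful, so the cleanest route is: if $A$ is infinite-dimensional, either some finitely generated subalgebra is infinite-dimensional, in which case take $B$ to be that; or every finitely generated subalgebra is finite-dimensional, in which case $A$ is locally finite and one handles this separately, noting that a locally finite infinite-dimensional associative algebra still contains elements generating a non-nilpotent, hence "large", structure). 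Granting an infinite-dimensional finitely generated $B\subseteq A$, Theorem~\ref{thm:intro main} tells us $U(\Der(B))$ is not noetherian, and moreover $U(\Inn(B))$ is not noetherian whenever $\Inn(B)$ is infinite-dimensional.

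Next I would connect $U(A_{\mathrm{Lie}})$ to these. The natural bridge is the surjection $B_{\mathrm{Lie}}\surj \Inn(B)\cong B/Z(B)$, together with the inclusion $B_{\mathrm{Lie}}\hookrightarrow A_{\mathrm{Lie}}$ of Lie algebras. A universal enveloping algebra functor does not in general send subalgebra inclusions to ``nice'' ring maps, but it does send a Lie subalgebra inclusion $\mf{h}\hookrightarrow\g$ to an inclusion $U(\mf{h})\hookrightarrow U(\g)$ which makes $U(\g)$ a free (hence faithfully flat) $U(\mf{h})$-module on both sides, by the PBW theorem. The standard fact I would invoke is: if $R\subseteq S$ is a ring extension with $S$ faithfully flat as a left and right $R$-module, and $R$ is not left (resp.\ right) noetherian, then $S$ is not left (resp.\ right) noetherian — because an infinite strictly ascending chain of left ideals of $R$ stays strictly ascending after applying $S\otimes_R-$. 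Applying this with $R = U(B_{\mathrm{Lie}})$ and $S = U(A_{\mathrm{Lie}})$, it suffices to show $U(B_{\mathrm{Lie}})$ is not noetherian. In turn, $U(\Inn(B))$ is a quotient of $U(B_{\mathrm{Lie}})$ (a quotient of a noetherian ring is noetherian), so if $\Inn(B)$ is infinite-dimensional we are done by Theorem~\ref{thm:intro main}. The one remaining case is $\Inn(B)$ finite-dimensional, i.e.\ $Z(B)$ has finite codimension in $B$; here I would either enlarge $B$ within $A$ to kill this (if $A/Z(A)$-type obstructions allow) or observe that then $B$ itself, or a slightly larger finitely generated subalgebra, falls under case~\eqref{case:finite codimension center} of Theorem~\ref{thm:intro main}, so that $U(\Der(B))$ is non-noetherian, and $\Der(B)$ contains $\Inn(B)$ — but to feed this back into $U(A_{\mathrm{Lie}})$ I instead note directly that $A_{\mathrm{Lie}}$ surjects onto $\Inn(B')$ for a large enough finitely generated infinite-dimensional $B'\subseteq A$ with infinite-dimensional center quotient, which exists unless $A$ is ``almost commutative'' in a strong sense, and that commutative case is covered because a commutative infinite-dimensional finitely generated algebra has non-noetherian $U(\Der(A))$ and for commutative $A$ one has $U(A_{\mathrm{Lie}}) = $ the polynomial-type symmetric algebra whose noetherianity forces $\dim_\kk A < \infty$.

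The main obstacle I anticipate is the case analysis needed to guarantee an infinite-dimensional finitely generated subalgebra $B\subseteq A$ with $\Inn(B)$ infinite-dimensional — equivalently, controlling how much of $A$'s infinite-dimensionality can ``hide'' in centers of finitely generated subalgebras. If $A$ is commutative this cannot be arranged at all, so that case must be dispatched separately (and there $A_{\mathrm{Lie}}$ is abelian, $U(A_{\mathrm{Lie}})$ is the symmetric algebra $S(A)$, which is noetherian iff $\dim_\kk A<\infty$, giving the result immediately). For noncommutative $A$, one expects that some finitely generated subalgebra is noncommutative and infinite-dimensional with noncommutative behaviour persisting, so that $\Inn(B)\cong B/Z(B)$ is infinite-dimensional after possibly enlarging $B$; making this precise, and handling the boundary case where every finitely generated subalgebra is a finite module over its center, is where the real work lies, and it will lean on the same circle of ideas (minimal primes, PI-theory, faithful flatness of PBW extensions) used in the proof of Theorem~\ref{thm:intro main}.
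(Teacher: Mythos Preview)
Your proposal is far more complicated than necessary, and has a genuine gap in the locally finite case.

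The paper's proof is three lines. If $A$ is not finitely generated, then adjoining generators one at a time produces an infinite strictly ascending chain of associative subalgebras, hence of Lie subalgebras, so $U(A)$ fails ACC on Lie subalgebras (Proposition~\ref{prop:non-noetherian}\eqref{item:ACC on subalgebras}). If $A$ is finitely generated and $Z(A)$ is infinite-dimensional, then $Z(A)$ is an infinite-dimensional abelian Lie subalgebra of $A$, so $U(A)$ is not noetherian (Proposition~\ref{prop:non-noetherian}\eqref{item:abelian subalgebra}). If $A$ is finitely generated and $Z(A)$ is finite-dimensional, then $\Inn(A)\cong A/Z(A)$ is infinite-dimensional and Proposition~\ref{prop:finite-dimensional center} gives that $U(\Inn(A))$ is not noetherian; since $U(A)$ surjects onto $U(\Inn(A))$, the result follows.

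Compared to this, your plan has two problems. First, when $A$ is not finitely generated and is locally finite (every finitely generated subalgebra is finite-dimensional), your reduction to an infinite-dimensional finitely generated $B\subseteq A$ is impossible, and you offer no substitute beyond ``handle separately''; the ACC-on-subalgebras observation above is exactly what is missing. Second, the case you flag as ``where the real work lies'' --- $\Inn(B)$ finite-dimensional --- is in fact trivial: if $B$ is infinite-dimensional and $B/Z(B)$ is finite-dimensional, then $Z(B)$ is an infinite-dimensional abelian Lie subalgebra of $B$, and you are done immediately. Your attempted workaround via $U(\Der(B))$ cannot succeed anyway, since there is no Lie homomorphism from $A_{\mathrm{Lie}}$ (or $B_{\mathrm{Lie}}$) onto $\Der(B)$ in general, so non-noetherianity of $U(\Der(B))$ says nothing about $U(A_{\mathrm{Lie}})$. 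No PI-theory, minimal primes, or faithful-flatness arguments are needed at this stage; the heavy lifting was already absorbed into Proposition~\ref{prop:finite-dimensional center}.
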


\vspace{2mm}

\noindent \textbf{Acknowledgments:} Some of this research was done as part of the second author's PhD at the University of Edinburgh, under the supervision of Prof. Susan J. Sierra. Part of this work was carried out during the second author's visit to the University of Waterloo; he would like to thank the institution for their hospitality and support. This collaboration was made possible by the generous funding provided through the University of Edinburgh's Laura Wisewell Travel Scholarship, and NSERC grant RGPIN-2022-02951.

\section{Preliminaries}

For brevity, we say \emph{affine algebra} to refer to a finitely generated $\kk$-algebra. When we say that a (Lie or associative) algebra is (in)finite-dimensional, we always mean (in)finite-dimensional as a $\kk$-vector space. Thus, when referring to other notions of dimension, such as Krull dimension, we mention this explicitly.

In this section, we define the objects of interest for this paper, state the main result of this paper, and mention some standard results which we will use throughout. We begin by recalling the definition of derivations of algebras.

\begin{dfn}\label{def:derivation}
  Let $A$ be a $\kk$-algebra. A \emph{derivation} of $A$ is a $\kk$-linear map $d \colon A \to A$ such that $d(ab) = d(a)b + ad(b)$ for all $a,b \in A$. The set of all derivations of $A$ is denoted $\Der(A)$. It is a Lie algebra with commutator bracket $[d_1,d_2] = d_1 \circ d_2 - d_2 \circ d_1$.

  Letting $Z(A)$ be the center of $A$, note that $\Der(A)$ is naturally a module over $Z(A)$: for $z \in Z(A)$, $d \in \Der(A)$, we have $zd \in \Der(A)$ defined by $(zd)(a) \coloneqq z \cdot d(a)$ for $a \in A$. A Lie subalgebra of $\Der(A)$ which is also a $Z(A)$-submodule is called a \emph{submodule-subalgebra}.

  An \emph{inner derivation} of $A$ is a derivation of the form $\ad_a \colon A \to A$ defined by $\ad_a(b) = ab - ba$, where $a,b \in A$. The set of all inner derivations of $A$ is denoted $\Inn(A)$. It is a Lie ideal of $\Der(A)$.
\end{dfn}

\begin{rem}
    Let $A$ be a $\kk$-algebra. We have $\Inn(A) \cong A/Z(A)$, where $Z(A)$ is the center of $A$, and we view $A$ as a Lie algebra with commutator bracket. This isomorphism is induced by the map
    \begin{align*}
        A &\to \Inn(A) \\
        a &\mapsto \ad_a.
    \end{align*}
\end{rem}

In this paper, we are interested in Lie algebras of derivations of affine algebras, and prove the following theorem.

\begin{thm}\label{thm:main}
  Let $A$ be an infinite-dimensional affine algebra. Then $U(\Der(A))$ is not noetherian. Furthermore, $U(\Inn(A))$ is not noetherian whenever $\Inn(A)$ is infinite-dimensional (in other words, whenever $\dim_\kk(A/Z(A)) = \infty$).
\end{thm}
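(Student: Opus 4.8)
The plan is to prove the theorem by splitting into the commutative and noncommutative cases, and in each case reducing to a situation where a known non-noetherianity result (or an easier direct argument) applies. Throughout, the key structural fact I would use is that non-noetherianity passes well along certain homomorphisms of Lie algebras: if $\g \to \h$ is a surjection of Lie algebras and $U(\h)$ is not noetherian, then $U(\g)$ is not noetherian, since $U(\g) \surj U(\h)$ and a quotient of a noetherian ring is noetherian; and if $\h \le \g$ is a subalgebra with $U(\h)$ not noetherian, one wants $U(\g)$ to be faithfully flat (or at least flat) over $U(\h)$, which holds by the PBW theorem when $\h$ is a Lie subalgebra (using a vector-space splitting). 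So the whole strategy is: find a suitable subquotient of $\Der(A)$ (or of $\Inn(A)$) whose enveloping algebra is visibly non-noetherian.

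\emph{Commutative case.} Suppose $A$ is commutative. Pick a minimal prime $P \ideal A$; since $A$ is affine, $A/P$ is an affine commutative domain, and it is infinite-dimensional provided $P$ can be chosen so that $\dim_\kk(A/P) = \infty$ — this requires an argument, since a priori all minimal primes could have finite-dimensional quotients, but then $A$ would be finite-dimensional (an affine algebra with finitely many maximal-and-minimal primes, each with finite-dimensional quotient, has finite length), contradicting $\dim_\kk A = \infty$. Derivations preserve minimal primes (if $d$ is a derivation and $P$ minimal, then $d(P) \subseteq P$, a standard fact in characteristic zero, e.g.\ via the primary decomposition and the Leibniz rule applied to $p^n$), so we get $\varphi\colon \Der(A) \to \Der(A/P)$. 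Now $B \coloneqq A/P$ is an affine domain of Krull dimension $\ge 1$. If $\Kdim B = 1$, the image of $\varphi$ — which contains all derivations of $B$ that lift to $A$, in particular enough derivations — has non-noetherian enveloping algebra by the results of \cite{Buzaglo}; one has to check that $\im\varphi$ is large enough, e.g.\ that it is a submodule-subalgebra of $\Der(B)$ of the type handled there, or restrict attention to $\Der_{A}(B) \coloneqq \im \varphi$ and invoke the relevant statement from \cite{Buzaglo} for such subalgebras. If $\Kdim B \ge 2$, one can be cruder: $\Der(B)$ contains a free $B$-module of rank $\ge 2$ worth of commuting derivations locally, and one exhibits an explicit non-finitely-generated left ideal in $U$ of the relevant subalgebra — this is the "significantly easier" case. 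Either way $U(\Der(B))$, hence $U(\im\varphi)$, is not noetherian, hence $U(\Der(A))$ is not noetherian.

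\emph{Noncommutative case.} Split according to whether $A$ is module-finite over $Z \coloneqq Z(A)$ and whether $\dim_\kk(A/Z) < \infty$, as in the four cases listed in the introduction. In case \eqref{case:finite codimension center}, $\dim_\kk(A/Z) < \infty$ forces $Z$ to be infinite-dimensional affine (since $A$ is affine and module-finite over $Z$, $Z$ is affine by Artin--Tate), and one runs the commutative argument with the homomorphism $\Der(A) \to \Der(A/P)$ for a minimal prime $P$, the technical point being to control $\Der(A/P)$ and the image in terms of $\Der(Z/(P\cap Z))$ — derivations of $A$ restrict to derivations of $Z$, and one pushes the Krull-dimension-$1$ or $\ge 2$ dichotomy through. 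In cases \eqref{case:finite over center}--\eqref{case:not finite over center}, the hypothesis is designed to guarantee $\dim_\kk(A/Z) = \infty$, i.e.\ $\Inn(A) \cong A/Z$ is infinite-dimensional; here the plan is to prove the stronger second assertion directly: $U(\Inn(A))$ is not noetherian. For this I would use that $\Inn(A)$ as a Lie algebra is $A/Z$ with commutator bracket, and that $A$ being affine and infinite-dimensional over $Z$ means $A/Z$ contains, as a Lie subalgebra, something resembling a "Witt-like" or "Heisenberg-with-growth" configuration; the cleanest route is probably: show $A$ (affine, $\dim_\kk A/Z = \infty$) has a subquotient Lie algebra admitting a surjection onto a known non-noetherian one (e.g.\ adapt the Sierra--Walton ideal, or use that $A/Z$ contains an infinite-dimensional abelian-by-something piece forcing a non-finitely-generated ideal in $U$), then pull back. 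In cases \eqref{case:not finite over center} and \eqref{case:finite over center}, $A$ non-module-finite over $Z$ or $\dim_\kk Z = \infty$ gives extra room to build such subquotients using classical PI / noncommutative ring theory (e.g.\ the structure of primitive quotients, or Small's theorem on affine PI rings).

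The main obstacle is case \eqref{case:finite codimension center} — emulating the commutative reduction in the noncommutative setting. The issue is that, unlike the commutative case, controlling $\im(\Der(A) \to \Der(A/P))$ and verifying it is "large enough" to invoke \cite{Buzaglo} (or to run the easy higher-dimensional argument) is delicate: derivations of $A$ need not restrict to a nice submodule-subalgebra of $\Der(Z/(P\cap Z))$, and the bridge between $\Der(A)$ and $\Der$ of a commutative affine domain must be built carefully, tracking how a minimal prime of $A$ meets the center and how derivations descend. This is where the "much more technical argument" lives, and where I would expect to spend most of the effort, likely via a careful analysis of $A$ as a module over $Z$ and the induced action of $\Der(A)$ on $Z/(P \cap Z)$.
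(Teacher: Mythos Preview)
Your high-level decomposition matches the paper's exactly, and you correctly isolate case \eqref{case:finite codimension center} as the technical heart. However, there are two genuine gaps.

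\textbf{Commutative case: the map $\varphi$ can be zero.} You note that $\im\varphi$ is a submodule-subalgebra of $\Der(A/P)$ and plan to invoke the Krull-dimension-1 result or an easy higher-dimensional argument. But nothing you wrote rules out $\im\varphi = 0$, and this can genuinely happen (e.g.\ if every derivation of $A$ sends $A$ into $P$). The paper handles this by using that $P$, being a minimal prime of a commutative noetherian ring, is \emph{associated}: $P = \Ann(x)$ for some $x \in A$. From this one builds a $\kk$-linear (not Lie) map $\tau\colon \Der(A/P) \to \Der(A)$, $d \mapsto d'$ with $d'(a) = bx$ when $d(a+P) = b+P$, and shows $\tau$ is injective. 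If $x \in P$ one gets an infinite-dimensional abelian subalgebra of $\Der(A)$ directly; if $x \notin P$ then $\varphi\circ\tau(d) = (x+P)d \neq 0$, so $\im\varphi \neq 0$ and your plan goes through. Without this step your argument has a hole. (Separately, your $\Kdim \ge 2$ sketch via ``rank $\ge 2$ of commuting derivations locally'' is not how the paper proceeds and is not obviously correct globally; the paper instead takes a non-unit $f$, notes $(f)$ has infinite codimension, and shows $\h = A\cdot d$ for $d \in f\g$ has infinite-dimensional abelianization.)

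\textbf{Noncommutative cases \eqref{case:finite over center}--\eqref{case:not finite over center}: the plan is too vague to be a plan.} Saying one will find a ``Witt-like or Heisenberg-with-growth configuration'' or ``an infinite-dimensional abelian-by-something piece'' is a hope, not a method. The paper's arguments here are concrete and each uses a specific piece of ring theory that your proposal does not mention: in case \eqref{case:finite over center}, writing $A = \sum Z(A)x_i$ immediately gives $\Inn(A) = \sum Z(A)\ad_{x_i}$, and one of the abelian pieces $Z(A)\ad_{x_i}$ is infinite-dimensional; in case \eqref{case:finite-dimensional center}, one first forces $A$ noetherian and every element algebraic, then passes to $A/N$ for $N$ the prime radical, applies Goldie's theorem and Artin--Wedderburn, and finds an infinite-dimensional maximal subfield of a division-ring factor to produce a solvable subalgebra with infinite-dimensional abelianization; in case \eqref{case:not finite over center}, one uses Zorn to pass to a quotient $B$ with every proper further quotient module-finite over its center, then a PI/semiprime dichotomy. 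None of these are close to ``adapt the Sierra--Walton ideal,'' and your proposal gives no indication of how to locate the needed abelian or solvable piece in, say, an affine algebra with finite-dimensional center that is far from PI. For case \eqref{case:finite codimension center}, your idea of restricting to $\Der(Z/(P\cap Z))$ is not the paper's route either: the paper first proves $A/P$ is itself commutative (using that a nonzero central annihilator of a commutator would violate primeness), so one lands directly in $\Der$ of a commutative domain; the delicate part is again showing the image is nonzero, which now requires a noetherian-induction argument on ideals $J \subseteq P$ since associated primes are unavailable.
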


The following result gives a useful tool for showing that an enveloping algebra is non-noetherian. We will use it extensively to prove Theorem \ref{thm:main}.

\begin{prop}\label{prop:non-noetherian}
  Let $\g$ be a Lie algebra such that $U(\g)$ is noetherian. Then all of the following hold.
  \begin{enumerate}
    \item If $\h$ is a subalgebra of $\g$ then $U(\h)$ is noetherian.\label{item:subalgebra}
    \item If $\g$ has finite codimension in a larger Lie algebra $\h$, then $U(\h)$ is noetherian.\label{item:finite codimension}
    \item Any abelian subalgebra of $\g$ is finite-dimensional.\label{item:abelian subalgebra}
    \item The Lie algebra $\g$ has ACC on Lie subalgebras, in other words, any infinite ascending chain of subalgebras $\h_1 \subseteq \h_2 \subseteq \cdots$ of $\g$ stabilizes.\label{item:ACC on subalgebras}
    \item The abelianization $\g^{\ab} = \g/[\g,\g]$ of $\g$ is finite-dimensional.\label{item:abelianization}
    \item If $\g$ is solvable, then $\g$ is finite-dimensional.\label{item:solvable}
  \end{enumerate}
\end{prop}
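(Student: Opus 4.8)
The overarching strategy is to reduce every item to the well-known fact that $U(\g)$ is a filtered ring whose associated graded ring $\gr U(\g) \cong S(\g)$ is the (commutative) symmetric algebra. Recall two standard facts I will use repeatedly: (a) if a filtered ring has noetherian associated graded ring, then it is itself noetherian; and more importantly for us, a kind of converse via subalgebras — and (b) for a Lie subalgebra $\h \subseteq \g$, the PBW theorem makes $U(\g)$ a free (hence faithfully flat) left and right $U(\h)$-module, which is the engine behind item \eqref{item:subalgebra}.

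For item \eqref{item:subalgebra}: since $U(\g)$ is free as a right $U(\h)$-module (PBW, choosing an ordered basis of $\g$ extending one of $\h$), it is faithfully flat over $U(\h)$; a left ideal $I \subseteq U(\h)$ can be recovered as $I = (I \otimes_{U(\h)} U(\g)) \cap U(\h)$, so an ascending chain of left ideals of $U(\h)$ that failed to stabilize would extend to a non-stabilizing chain in $U(\g)$, contradicting noetherianity. The right-handed statement is symmetric. Item \eqref{item:finite codimension} then follows from \eqref{item:subalgebra} in reverse together with a filtration/Rees argument: if $\g \subseteq \h$ has finite codimension, pick a vector-space complement $V$ with $\dim_\kk V < \infty$; one filters $U(\h)$ by powers of the (finitely generated) left ideal generated by $\g$ plus the finitely many basis vectors of $V$, and checks that the associated graded is a finite extension of $U(\g)$-type data — more cleanly, one notes $U(\h)$ is generated as a ring by $U(\g)$ and finitely many elements, and is finitely generated as both a left and right $U(\g)$-module (again by PBW, since $\dim_\kk V<\infty$), so the Eakin–Nagata / Formanek–Jategaonkar theorem for noncommutative rings gives that $U(\g)$ noetherian implies $U(\h)$ noetherian. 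Items \eqref{item:abelian subalgebra} and \eqref{item:solvable} are the down-to-earth payoff: if $\a \subseteq \g$ is an infinite-dimensional abelian subalgebra, then by \eqref{item:subalgebra} $U(\a)$ is noetherian, but $U(\a) = S(\a)$ is a polynomial ring in infinitely many variables, which is not noetherian — contradiction; for \eqref{item:solvable}, a solvable Lie algebra that is infinite-dimensional must have an infinite-dimensional term somewhere in its derived series, and an infinite-dimensional abelian Lie algebra appears as a subquotient; passing to the relevant subalgebra of $\g$ and then to a quotient (quotients of $U$ are handled since $U(\g/\mf a) = U(\g)/U(\g)\mf a$ is a quotient ring, hence noetherian) reduces to \eqref{item:abelian subalgebra}.

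For item \eqref{item:abelianization}: $\g^{\ab} = \g/[\g,\g]$ is abelian, and $U(\g^{\ab}) = S(\g^{\ab})$ is a homomorphic image of $U(\g)$, hence noetherian; a polynomial ring $S(\g^{\ab})$ is noetherian only if $\dim_\kk \g^{\ab} < \infty$. Item \eqref{item:ACC on subalgebras} is the subtlest and I expect it to be the main obstacle, since a chain of Lie subalgebras $\h_1 \subseteq \h_2 \subseteq \cdots$ does not obviously translate to a chain of one-sided ideals of $U(\g)$. The plan is to pass to the union $\h_\infty = \bigcup_i \h_i$, which is a (possibly infinite-dimensional) Lie subalgebra of $\g$, so $U(\h_\infty)$ is noetherian by \eqref{item:subalgebra}; then consider inside $U(\h_\infty)$ the left ideals $J_i = U(\h_\infty)\,\h_i$ generated by $\h_i$. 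One has $J_1 \subseteq J_2 \subseteq \cdots$, and the key claim to verify is that $J_i \cap \h_\infty = \h_i$ (using the PBW filtration on $U(\h_\infty)$ and the fact that the degree-one part of $U(\h_\infty)\h_i$ is exactly $\h_i$), so the chain $\{J_i\}$ is strictly increasing whenever $\{\h_i\}$ is; noetherianity of $U(\h_\infty)$ then forces stabilization. Verifying $J_i \cap \h_\infty = \h_i$ is the one genuinely technical point — it amounts to a careful PBW-basis bookkeeping argument showing no element of $\h_\infty \setminus \h_i$ lies in the left ideal generated by $\h_i$ — and that is where I would spend the bulk of the effort.
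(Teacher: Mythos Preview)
Your arguments for items \eqref{item:subalgebra}, \eqref{item:abelian subalgebra}, \eqref{item:abelianization}, and \eqref{item:solvable} are correct and agree with the paper's treatment (the paper simply cites \cite{SierraWalton}, \cite{Buzaglo}, \cite{AmayoStewart} for \eqref{item:subalgebra}, \eqref{item:finite codimension}, \eqref{item:ACC on subalgebras} and gives the same short reductions for the remaining three).

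There is, however, a genuine gap in your argument for item \eqref{item:finite codimension}. The assertion that $U(\h)$ is finitely generated as a left or right $U(\g)$-module when $\g$ has finite codimension in $\h$ is \emph{false}. Take $\h$ to be the two-dimensional non-abelian Lie algebra with basis $x,y$ and bracket $[x,y]=y$, and let $\g=\kk x$, which has codimension one. Then $U(\g)=\kk[x]$, and PBW gives $U(\h)=\bigoplus_{n\ge 0}\kk[x]\,y^n$ as a left $\kk[x]$-module, free of countably infinite rank; the same holds on the right. So neither the ``finite module over a noetherian ring'' argument nor any Eakin--Nagata/Formanek--Jategaonkar variant is available. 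Your alternative ``filtration/Rees'' sketch does not obviously rescue this either: the natural filtration by number of $V$-factors is not well-defined, because $[\g,V]$ need not lie in $\g$. The paper does not prove \eqref{item:finite codimension} here and defers to \cite[Proposition~2.1]{Buzaglo}; the actual argument is more delicate than what you have written.

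By contrast, your concern about item \eqref{item:ACC on subalgebras} is misplaced: the claim $J_i\cap\h_\infty=\h_i$ is immediate from the standard isomorphism $I/I^2\cong\h_\infty$, where $I\subseteq U(\h_\infty)$ is the augmentation ideal. For $u\in U(\h_\infty)$ and $h\in\h_i$ one has $uh-\varepsilon(u)h\in I^2$ (with $\varepsilon$ the counit), so the image of $J_i=U(\h_\infty)\h_i$ in $I/I^2$ is exactly $\h_i$; since $\h_\infty\cap I^2=0$ by PBW, any element of $J_i\cap\h_\infty$ already lies in $\h_i$. This is a two-line computation, not the bulk of the effort.
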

\begin{proof}
  \eqref{item:subalgebra} is \cite[Lemma 1.7]{SierraWalton}, \eqref{item:finite codimension} is \cite[Proposition 2.1]{Buzaglo}, and \eqref{item:ACC on subalgebras} is \cite[Proposition 11.1.2]{AmayoStewart}.

  For \eqref{item:abelian subalgebra}, suppose $\mf{a}$ is an infinite-dimensional abelian subalgebra of $\g$. Then $U(\mf{a})$ is not noetherian, since it is isomorphic to a polynomial ring in infinitely many variables, so $U(\g)$ is not noetherian by \eqref{item:subalgebra}.

  For \eqref{item:abelianization}, suppose the abelianization $\g^{\ab} = \g/[\g,\g]$ is infinite-dimensional. Now, $U(\g^{\ab})$ is not noetherian by \eqref{item:abelian subalgebra}. The natural map $U(\g) \to U(\g^{\ab})$ is surjective, so $U(\g)$ is also non-noetherian.

  \eqref{item:solvable} is \cite[Corollary 2.4]{AmayoStewart2}, but we give a short proof here regardless. Let $D^0(\g) = \g$ and for $n \in \NN$, let $D^{n+1}(\g) = [D^n(\g),D^n(\g)]$. Suppose $\g$ is infinite-dimensional and solvable, so there exists $N \in \NN$ such that $D^N(\g) = 0$. Let $n \in \NN$ be minimal such that $D^{n+1}(\g)$ is finite-dimensional, and let $\h = D^n(\g)$. Then $\h$ is an infinite-dimensional Lie algebra whose derived subalgebra is finite-dimensional, so $U(\h)$ is not noetherian by \eqref{item:abelianization}. Therefore, $U(\g)$ is not noetherian by \eqref{item:subalgebra}.
\end{proof}

We prove Theorem \ref{thm:main} through a series of lemmas and propositions depending on the form that the algebra $A$ takes.

\section{Proof of Theorem \ref{thm:main} in the commutative case}

The first case we consider for the proof of Theorem \ref{thm:main} is where the $\kk$-algebra $A$ is a commutative domain. The most difficult situation is when $\Kdim(A) = 1$, which was proved in \cite{SierraWalton} for $A = \kk[t]$ and $\kk[t,t^{-1}]$, and later completed in \cite{Buzaglo} for an arbitrary affine commutative domain of Krull dimension 1. If $A$ has Krull dimension at least 2, the proof follows easily from Proposition \refnum{prop:non-noetherian}{item:abelianization} by constructing a subalgebra of $\Der(A)$ with infinite-dimensional abelianization.

\begin{lem}\label{lem:commutative domain}
  Let $A$ be an affine commutative domain such that $\dim_\kk(A) = \infty$, and let $\g$ be a nonzero submodule-subalgebra of $\Der(A)$. Then $U(\g)$ is not noetherian. In particular, $U(\Der(A))$ is not noetherian.
\end{lem}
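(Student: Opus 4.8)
The plan is to split into cases according to the Krull dimension of $A$. First I would handle the case $\Kdim(A) = 1$. Here $A$ is an affine commutative domain of Krull dimension $1$, so by the results of \cite{Buzaglo} we already know that $U(\Der(A))$ is not noetherian. But the lemma claims more: that $U(\g)$ is non-noetherian for \emph{every} nonzero submodule-subalgebra $\g$. The key observation is that any nonzero $d \in \Der(A)$ generates, as a $Z(A) = A$-module, a submodule $A d$ which is a submodule-subalgebra of $\Der(A)$; so it suffices to show $U(Ad)$ is non-noetherian, and I would extract from \cite{Buzaglo} (or reprove) that $Ad$ contains an infinite-dimensional abelian subalgebra, or more precisely a subalgebra of the type used there to defeat noetherianity, and then apply Proposition \refnum{prop:non-noetherian}{item:subalgebra}. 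The point is that in Krull dimension $1$ the module $Ad$ is ``large'' enough to run the Sierra--Walton/Buzaglo argument internally.

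Second, the case $\Kdim(A) \geq 2$. This is the easy case and uses Proposition \refnum{prop:non-noetherian}{item:abelianization}: I would build inside $\g$ a subalgebra with infinite-dimensional abelianization. Concretely, pick $0 \neq d \in \g$. Since $\g$ is a $Z(A)$-submodule and $Z(A) = A$, we have $Ad \subseteq \g$. Now localize: there is a nonzero $f \in A$ such that on the localization $A_f$ the derivation $d$ becomes part of a local coordinate system, i.e. $A_f$ has a ``coordinate'' $x$ with $d(x)$ a unit. Rescaling, we may assume $e \coloneqq (1/d(x)) d$ satisfies $e(x) = 1$. Then consider the subalgebra spanned by $\{x^n e : n \geq 0\}$ together with elements of the form $g e$ where $g$ ranges over a polynomial subring $\kk[y] \subseteq A$ in a second coordinate $y$ with $d(y) = e(y) = 0$ (such a $y$ exists precisely because $\Kdim(A) \geq 2$, so the kernel of $d$ acting on the fraction field is nontrivial and transcendental). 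Since $e(y) = 0$, the elements $y^n e$ commute with each other: $[y^m e, y^n e] = y^m e(y^n) e - y^n e(y^m) e = 0$. Hence $\spn_\kk\{y^n e : n \geq 0\}$ is an infinite-dimensional abelian subalgebra of $\g$ (after clearing denominators to stay inside $A$, using that $\g$ is an $A$-module), and Proposition \refnum{prop:non-noetherian}{item:abelian subalgebra} finishes this case. Actually this argument even subsumes part of the structure: whenever $d$ has a nonzero ``first integral,'' we win immediately.

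The ``In particular'' clause is immediate: $\Der(A)$ is itself a nonzero submodule-subalgebra of $\Der(A)$, since $\dim_\kk(A) = \infty$ forces $\Der(A) \neq 0$ (an infinite-dimensional affine commutative domain has positive transcendence degree, hence nonzero derivations), so the main statement applies.

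The main obstacle I expect is the Krull dimension $1$ case: there is no transcendental first integral available, so the cheap abelian-subalgebra trick fails, and one genuinely needs the finer analysis of \cite{Buzaglo} (which in turn generalizes the delicate Sierra--Walton computation for the Witt algebra). The technical heart is showing that passing from $\Der(A)$ to an arbitrary nonzero submodule-subalgebra $\g$ does not destroy the argument — i.e. that the chain of subalgebras or the specific non-noetherian witness used in \cite{Buzaglo} can be arranged to lie inside $Ad \subseteq \g$ for any chosen $0 \neq d \in \g$. I would either cite a version of the Buzaglo result phrased for submodule-subalgebras, or re-run its proof keeping track of the fact that all relevant elements are $A$-multiples of a single fixed derivation.
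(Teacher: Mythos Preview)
Your case split by Krull dimension matches the paper, but both branches differ from the paper's argument, and the $\Kdim(A)\ge 2$ branch has a genuine gap.

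For $\Kdim(A)=1$: the paper does not re-run \cite{Buzaglo} inside $Ad$. Instead it observes that any nonzero $A$-submodule $\g\subseteq\Der(A)$ automatically has \emph{finite codimension} in $\Der(A)$: since $\Kdim(A)=1$, the module $\Der(A)$ is finitely generated, torsion-free of rank $1$ over $A$, so $\Der(A)/\g$ is a finitely generated torsion module over a one-dimensional affine domain and hence finite-dimensional over $\kk$. Then Proposition~\refnum{prop:non-noetherian}{item:finite codimension} transfers non-noetherianity from $U(\Der(A))$ (known by \cite{Buzaglo}) to $U(\g)$ in one line. This is considerably cleaner than your proposal to replicate the delicate Krull-dimension-$1$ analysis inside an arbitrary $Ad$.

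For $\Kdim(A)\ge 2$: your argument hinges on finding a transcendental $y\in A$ with $d(y)=0$, but such a first integral need not exist in $A$. For instance, $d=x\partial_x+\alpha y\partial_y$ on $\kk[x,y]$ with $\alpha\in\kk\setminus\QQ$ has $\ker(d)=\kk$; and since $\g$ could be $A\cdot d$ for exactly such a $d$, every nonzero element of $\g$ has the same (trivial) kernel. You correctly note that a first integral exists in the fraction field, but your ``clearing denominators'' step does not recover an abelian subalgebra of $\g$: if $y=p/q$ with $d(p/q)=0$ but $d(p)\neq 0$, then $[p^md,p^nd]=(n-m)p^{m+n-1}d(p)\,d\neq 0$ for $m\neq n$. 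The paper sidesteps first integrals entirely. It chooses a non-unit $f\in A$, replaces $d$ by any nonzero element of $f\cdot\g$ (so that $d(A)\subseteq fA$), and sets $\h=Ad$. Then $[gd,hd]=(gd(h)-hd(g))d\in (f)d$, whence $\h^{\ab}$ surjects onto $A/(f)$, which is infinite-dimensional because $\Kdim(A/(f))\ge 1$. This reaches Proposition~\refnum{prop:non-noetherian}{item:abelianization} via the \emph{image} of $d$ rather than its kernel, and works uniformly without any localisation or coordinate choice.
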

\begin{proof}
  If $\Kdim(A) = 1$, then $U(\Der(A))$ is not noetherian by \cite[Theorem 3.3]{Buzaglo}. In this case, $\g$ must have finite codimension in $\Der(A)$, so $U(\g)$ is not noetherian by Proposition \refnum{prop:non-noetherian}{item:finite codimension}. Therefore, assume $A$ has Krull dimension at least 2.

  Let $f \in A \nonzero$ be a non-unit. Then $(f)$, the ideal of $A$ generated by $f$, has infinite codimension in $A$. Let $d \in f \cdot \g \nonzero$, and let $\h = A \cdot d$. Since $\g$ is an $A$-module, it follows that $\h$ is a Lie subalgebra of $\g$ with Lie bracket
  $$[gd,hd] = (gd(h) - d(g)h)d$$
  for $g,h \in A$. Since $d \in f \cdot \g$, we see that $gd(h) - d(g)h \in (f)$. Hence, the derived subalgebra $[\h,\h]$ is contained in $(f)d$ and therefore has infinite codimension in $\h$. By Proposition \refnum{prop:non-noetherian}{item:abelianization}, $U(\h)$ is not noetherian. It follows that $U(\g)$ is not noetherian by Proposition \refnum{prop:non-noetherian}{item:subalgebra}.
\end{proof}

\begin{rem}\label{rem:submodule-subalgebra}
    Let $B$ be an affine $\kk$-subalgebra of an affine commutative domain $A$ such that $\dim_\kk(A/B) < \infty$. The same proof as above works to show that $U(\g)$ is not noetherian if $\g$ is a nonzero Lie subalgebra of $\Der(A)$ which is also a $B$-submodule.
\end{rem}

We move on to the case where $A$ is commutative but not necessarily a domain. To prove this, we will need the following result, which states that minimal primes of a $\kk$-algebra $A$ are preserved by derivations of $A$.

\begin{prop}[{\cite[Proposition 14.2.3]{McConnellRobson}}]\label{prop:minimal primes}
    Let $A$ be a $\kk$-algebra and let $P$ be a minimal prime of $A$. Then $d(P) \subseteq P$ for all $d \in \Der(A)$.
\end{prop}

We are now ready to complete the proof of Theorem \ref{thm:main} in the case where $A$ is commutative. This is achieved by applying Proposition \ref{prop:minimal primes} to construct a Lie algebra homomorphism $\Der(A) \to \Der(A/P)$ for some minimal prime $P$, and then using Lemma \ref{lem:commutative domain} to conclude that $U(\Der(A))$ is not noetherian. We can do this because, as we will see, the image of the homomorphism is a submodule-subalgebra of $\Der(A/P)$. There is one subtlety: the Lie algebra homomorphism might be zero, so we need to be careful.

\begin{prop}\label{prop:commutative}
    Let $A$ be an affine infinite-dimensional commutative algebra. Then $U(\Der(A))$ is not noetherian.
\end{prop}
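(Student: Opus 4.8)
The plan is to reduce to the domain case handled by Lemma \ref{lem:commutative domain} via a minimal prime. Since $A$ is commutative and affine, it is noetherian, so it has finitely many minimal primes $P_1, \dots, P_n$, and $\bigcap_i P_i = \operatorname{nil}(A)$ is nilpotent. First I would observe that at least one quotient $A/P_i$ must be infinite-dimensional: if every $A/P_i$ were finite-dimensional, then $A/\operatorname{nil}(A) \hookrightarrow \prod_i A/P_i$ would be finite-dimensional, and since $\operatorname{nil}(A)$ is a finitely generated nilpotent ideal, $A$ itself would be finite-dimensional, contradicting the hypothesis. Fix such a minimal prime $P = P_i$ with $\dim_\kk(A/P) = \infty$. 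By Proposition \ref{prop:minimal primes}, every $d \in \Der(A)$ satisfies $d(P) \subseteq P$, so $d$ descends to a derivation $\bar{d}$ of $A/P$; this gives a Lie algebra homomorphism $\varphi \colon \Der(A) \to \Der(A/P)$.

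The next step is to identify the image $\g := \varphi(\Der(A))$ as a submodule-subalgebra of $\Der(A/P)$, so that Lemma \ref{lem:commutative domain} applies — provided $\g \neq 0$. It is clearly a Lie subalgebra. For the $Z(A/P) = A/P$-module structure: given $\bar{z} \in A/P$ and $\bar{d} = \varphi(d) \in \g$, lift $\bar{z}$ to some $z \in A$; then $zd \in \Der(A)$ (as $A$ is commutative, every element is central), and $\varphi(zd) = \bar{z}\,\bar{d}$, so $\bar{z}\,\bar{d} \in \g$. Thus $\g$ is a submodule-subalgebra of the affine commutative domain $A/P$, which is infinite-dimensional, and Lemma \ref{lem:commutative domain} yields that $U(\g)$ is not noetherian. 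Finally, since $\varphi$ is surjective onto $\g$, the induced map $U(\Der(A)) \to U(\g)$ is surjective, so $U(\Der(A))$ is not noetherian either.

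The one genuine subtlety — flagged in the excerpt — is that $\varphi$ might be the zero map, i.e. $\Der(A)$ could kill $P$ entirely (every derivation sends all of $A$ into $P$). I would handle this by induction on the length of a suitable filtration, or more directly as follows: if $\varphi = 0$ then every derivation of $A$ maps $A$ into $P$, hence into $\operatorname{nil}(A)$ after intersecting over all minimal primes; iterating, every derivation maps $A$ into $\bigcap_k \operatorname{nil}(A)^{?}$ — more carefully, one shows by a standard argument that $d(A^{(m)})$ lands deep in the nilradical filtration, and since $\operatorname{nil}(A)$ is nilpotent this forces $\Der(A)$ to be ``small''. The cleanest route is probably to instead run the argument not just for one minimal prime but to note that if $\varphi_P = 0$ for the chosen $P$, one can pass to a different associated prime or to the ring $A/\operatorname{nil}(A)$ directly: a derivation of $A$ that vanishes modulo $P$ for every minimal prime $P$ vanishes modulo $\operatorname{nil}(A)$, and then a Leibniz/induction argument on powers of $\operatorname{nil}(A)$ shows $d$ is locally nilpotent in a strong sense, ultimately letting one replace $A$ by $A/\operatorname{nil}(A)$ — which is a finite product of domains, reducing to Lemma \ref{lem:commutative domain} applied to an infinite-dimensional factor. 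I expect this case analysis around the vanishing of $\varphi$ to be the main obstacle; the rest is routine once the correct minimal (or associated) prime is selected.
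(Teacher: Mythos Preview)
Your overall architecture matches the paper's: choose a minimal prime $P$ with $\dim_\kk(A/P)=\infty$, use Proposition~\ref{prop:minimal primes} to get the Lie map $\varphi\colon\Der(A)\to\Der(A/P)$, check that $\g=\im(\varphi)$ is a submodule-subalgebra, and apply Lemma~\ref{lem:commutative domain}. That part is fine, and your verification that $\g$ is an $A/P$-submodule is exactly what the paper does.

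The genuine gap is your treatment of the case $\g=0$. Your sketch does not work as written. First, $A/\operatorname{nil}(A)$ is in general \emph{not} a finite product of domains (e.g.\ $\kk[x,y]/(xy)$ is reduced but not a product), so ``reducing to Lemma~\ref{lem:commutative domain} applied to an infinite-dimensional factor'' is not available. Second, even granting that every derivation of $A$ lands in $\operatorname{nil}(A)$, passing to $A/\operatorname{nil}(A)$ tells you nothing about $\Der(A)$: you would be studying the image of the map $\Der(A)\to\Der(A/\operatorname{nil}(A))$, which by hypothesis is zero, so you are back where you started. The vague ``Leibniz/induction on powers of $\operatorname{nil}(A)$'' does not produce a finite-dimensionality statement for $\Der(A)$ either, since $d(N)\subseteq N$ only gives $d^m(A)\subseteq N$ for all $m$, not $d^m(A)\subseteq N^m$.

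The idea you are missing, and which the paper supplies, is to exploit that in a commutative noetherian ring every minimal prime is \emph{associated}: there exists $x\in A$ with $P=\Ann(x)$. This lets you build a $\kk$-linear (not Lie) map $\tau\colon\Der(A/P)\to\Der(A)$ by sending $d$ to the derivation $d'$ defined by $d'(a)=bx$ whenever $d(a+P)=b+P$; well-definedness is exactly the statement $P=\Ann(x)$. One checks $\tau$ is injective. Now there is a clean dichotomy. If $x\in P$ then $x^2=0$ and a short computation shows $\im(\tau)$ is an infinite-dimensional \emph{abelian} subalgebra of $\Der(A)$, so Proposition~\ref{prop:non-noetherian}\eqref{item:abelian subalgebra} finishes. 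If $x\notin P$ then $\varphi(\tau(d))=(x+P)\cdot d$, which is nonzero for $d\neq 0$ since $A/P$ is a domain; hence $\g\neq 0$ and your argument via Lemma~\ref{lem:commutative domain} goes through. This associated-prime trick is the single missing ingredient in your proposal.
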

\begin{proof}
    Let $P$ be a minimal prime of $A$ such that $\dim_\kk(A/P) = \infty$. Such a $P$ must exist, since $A$ is infinite-dimensional. Indeed, if $A/Q$ is finite-dimensional for all minimal primes $Q$ of $A$, then $\Kdim(A) = 0$, a contradiction.

    Now, $P$ is a minimal prime and $A$ is noetherian, so $P$ is associated \cite[Theorem 3.1]{Eisenbud}. In other words, there exists $x \in A$ such that $P = \Ann(x)$. Given $d \in \Der(A/P)$, we define a derivation $d' \in \Der(A)$ as follows: if $d(a + P) = b + P$, where $a,b \in A$, then we set $d'(a) = bx$. It is easy to see that $d'$ is a well-defined derivation of $A$. Let
    \begin{align*}
        \tau \colon \Der(A/P) &\to \Der(A) \\
        d &\mapsto d'.
    \end{align*}
    We emphasize that $\tau$ is only a $\kk$-linear map, not a homomorphism of Lie algebras.
    
    We claim that $\tau$ is injective. Let $d \in \ker(\tau)$ and let $a,b \in A$ such that $d(a + P) = b + P$. Since $d' = 0$, we have $0 = d'(a) = bx$, so $b \in \Ann(x) = P$. But then $d(a + P) = b + P = 0$. Now, this holds for any $a \in A$, so we conclude that $d = 0$. Hence, $\ker(\tau) = 0$, as claimed.
    
    Suppose $x \in P$, in other words, $x^2 = 0$. Note that, by definition of $d'$, we have $d'(x) = 0$ for all $d \in \Der(A/P)$. Let $d_1,d_2 \in \Der(A/P)$ and $a \in A$. Let $b,c \in A$ such that $d_1(a + P) = b + P$ and $d_2(a + P) = c + P$. Then
    \begin{align*}
        [d_1',d_2'](a) &= d_1'(d_2'(a)) - d_2'(d_1'(a)) = d_1'(cx) - d_2'(bx) \\
        &= d_1'(c)x + cd_1'(x) - d_2'(b)x - bd_2'(x) = 0,
    \end{align*}
    since $d_i'(x) = 0$ and $d_1'(c),d_2'(b) \in Ax$. Therefore, $[d_1',d_2'] = 0$ for all $d_1,d_2 \in \Der(A/P)$. It follows that $\mf{a} = \im(\tau) \subseteq \Der(A)$ is an infinite-dimensional abelian Lie algebra, since $\tau$ is injective. Hence, $U(\Der(A))$ is not noetherian, by Proposition \ref{prop:non-noetherian}.
    
    Therefore, we may assume that $x \notin P$. By Proposition \ref{prop:minimal primes}, $P$ is preserved by all derivations of $A$. Therefore, any $d \in \Der(A)$ induces a derivation $\overline{d}$ of $A/P$ as follows: for $a \in A$, we let
    $$\overline{d}(a + P) = d(a) + P.$$
    This gives a Lie algebra homomorphism
    \begin{align*}
    \varphi \colon \Der(A) &\to \Der(A/P) \\
    d &\mapsto \overline{d}.
    \end{align*}
    Let $\g = \im(\varphi) \subseteq \Der(A/P)$. We claim that $\g$ is a submodule-subalgebra of $\Der(A/P)$. Indeed, if $d \in \Der(A)$ and $a \in A$, then $(a + P)\overline{d} = \overline{ad} \in \g$.
    
    Note that $\g \neq 0$: if $d \in \Der(A/P) \nonzero$, then $\varphi(\tau(d)) = (x + P)d$ is a nonzero element of $\g$, since $x \notin P$ and $A/P$ is a domain. Now, $\g$ is a nonzero submodule-subalgebra of $\Der(A/P)$, so we can apply Lemma \ref{lem:commutative domain} to deduce that $U(\g)$ is not noetherian. Since $U(\Der(A))$ surjects onto $U(\g)$, it follows that $U(\Der(A))$ is not noetherian.
\end{proof}

\section{Proof of Theorem \ref{thm:main} in the noncommutative case}

We move on to the case where $A$ is not necessarily commutative. Proposition \ref{prop:minimal primes} still applies to noncommutative algebras, so we still get a Lie algebra homomorphism
$$\varphi \colon \Der(A) \to \Der(A/P)$$
if $P$ is a minimal prime of the noncommutative algebra $A$. Therefore, we could try to emulate the proof of Proposition \ref{prop:commutative}, but we run into some issues. First, to apply Lemma \ref{lem:commutative domain}, we need $\g$ to be a submodule-subalgebra of $A/P$. Unfortunately, this is not quite true: if $a \in A \setminus Z(A)$ and $d \in \Der(A)$, then $ad$ is not a derivation in general, so we cannot say that $(a + P)\varphi(d) = \varphi(ad)$ like we did in the commutative case. However, it is still true that $\g$ is a module over $\varphi(Z(A))$. Therefore, we can apply Remark \ref{rem:submodule-subalgebra} instead, provided $\varphi(Z(A))$ has finite codimension in $A/P$. This means that we must assume that $Z(A)$ has finite codimension in $A$ for this proof.

The second issue is that, to conclude that $U(\g)$ is not noetherian using Remark \ref{rem:submodule-subalgebra}, we need $A/P$ to be commutative. This is easily resolved.

\begin{lem}\label{lem:prime quotient}
    Let $A$ be an affine algebra with $\dim_\kk(A/Z(A)) < \infty$, and let $P$ be a prime ideal of $A$ of infinite codimension. Then $A/P$ is commutative.
\end{lem}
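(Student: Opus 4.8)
The plan is to exploit the fact that $Z(A)$ has finite codimension in $A$ together with primeness of $P$. Since $\dim_\kk(A/Z(A)) < \infty$, the algebra $A$ is a finite module over its center, so $A$ satisfies a polynomial identity and $A/P$ is a prime PI algebra. By Kaplansky's theorem, a primitive PI algebra is a finite-dimensional simple algebra over its center; more to the point, Posner's theorem tells us that the prime PI ring $A/P$ has a classical ring of quotients $Q$ which is a finite-dimensional central simple algebra over the field $F = \mathrm{Frac}(Z(A/P))$, of dimension $\dim_F Q = n^2$ for some $n$ (the PI degree). The key numerical input will be that this $n$ is bounded, and in fact equals $1$ under our hypotheses, which forces $A/P$ to be commutative.

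First I would record that $Z(A)$ maps onto a central subalgebra $\bar Z$ of $A/P$ with $A/P$ finitely generated as a $\bar Z$-module, say by $d$ generators where $d = \dim_\kk(A/Z(A)) + 1$ suffices (lift a $\kk$-basis of $A/Z(A)$ and add $1$). Hence $A/P$ is generated by $d$ elements as a module over its own center $Z(A/P) \supseteq \bar Z$. Now localize at the generic point: $Q = A/P \otimes_{Z(A/P)} F$ is a central simple $F$-algebra generated by $d$ elements as an $F$-module, so $\dim_F Q \le d$. But $\dim_F Q = n^2$ is a perfect square, so $n^2 \le d$; this bounds the PI degree of $A/P$ by a constant depending only on $\dim_\kk(A/Z(A))$.

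The main obstacle — and the place where the infinite-codimension hypothesis on $P$ must be used — is to rule out $n \ge 2$. Here is the mechanism I would use: if $n \ge 2$ then $A/P$ satisfies the standard identity $s_{2n}$ but not $s_{2n-2}$, and the Artin--Procesi / Formanek machinery shows that such a ring has a nonzero "central polynomial" behaviour forcing $\dim_\kk(A/P)$ to be infinite only if $\dim_\kk Z(A/P)$ is infinite — but $Z(A/P)$ is a commutative affine domain, hence infinite-dimensional iff it has positive Krull dimension, which is compatible with infinite codimension. So a cleaner route: observe that $A/P$ being a finite module over $Z(A/P)$ of rank $\le d$, we have $\dim_\kk(A/P) = \infty$ forcing $\dim_\kk Z(A/P) = \infty$, hence $Z(A/P)$ has Krull dimension $\ge 1$; but this does not yet give commutativity. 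I therefore expect the actual argument to instead pass through a more elementary observation specific to this paper's setting: the commutator ideal $[A,A] \cdot A$ of $A/P$ lies inside the image of $[A,A]$, which is finite-dimensional (since $[A,A] \subseteq A$ and $A/Z(A)$ is finite-dimensional, $[A,A]$ spans a finite-dimensional space); if $A/P$ is noncommutative this commutator ideal is a nonzero ideal of the prime ring $A/P$, hence (being nonzero in a prime ring that is a finite module over an affine center) has finite codimension in $A/P$ — contradicting that a finite-dimensional ideal in an infinite-dimensional prime affine PI ring would have infinite codimension unless it is zero. So the finishing step is: a nonzero two-sided ideal of a prime ring which is finite-dimensional over $\kk$ is impossible when the ring itself is infinite-dimensional, because in a prime ring a nonzero ideal $I$ satisfies $\dim_\kk(A/P) \le$ something controlled by $I$ via primeness — concretely, $\ell(I) = 0$ so left multiplication embeds $A/P$ into $\Hom_\kk(I, A/P \cdot I) $; if $I$ is finite-dimensional and also a finitely generated module over an affine commutative ring then this still needs care. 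I would present this last step as the crux and handle it by the clean statement: \emph{a prime ring containing a nonzero nilpotent-free finite-dimensional ideal is itself finite-dimensional}, which follows since the ideal contains a nonzero idempotent-generating simple component by Artinian-ness of finite-dimensional algebras, and primeness then propagates finiteness — this is the step I expect to require the most care to state correctly, and I would isolate it as a short sub-lemma before deducing the result.
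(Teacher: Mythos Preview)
Your proposal has a genuine gap. The PI/Posner route you sketch first is, as you yourself note, inconclusive: bounding the PI degree by $d$ does not force it to equal $1$, and nothing in the Artin--Procesi machinery you invoke uses the infinite-codimension hypothesis in a way that rules out $n\ge 2$. The second route, via the commutator ideal, contains an unjustified step: you assert that ``the commutator ideal $[A,A]\cdot A$ of $A/P$ lies inside the image of $[A,A]$''. It is true that the commutator \emph{space} $[B,B]$ (with $B=A/P$) is finite-dimensional, since writing $B=Z(B)+V$ with $\dim_\kk V<\infty$ gives $[B,B]=[V,V]$. But the two-sided ideal $B[B,B]B$ contains $Z(B)\cdot[B,B]$, and there is no reason for this to be finite-dimensional: $Z(B)$ is infinite-dimensional (since $B$ is and $B/Z(B)$ is not), and multiplication by a fixed nonzero commutator is injective on $Z(B)$ in a prime ring. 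So the ``nonzero finite-dimensional ideal in an infinite-dimensional prime ring'' sub-lemma, while correct, never gets fed a valid input.

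The paper's proof avoids PI theory entirely and is a two-line argument you were circling but did not land on. Set $B=A/P$ and suppose $x\in B\setminus Z(B)$. The linear map $Z(B)\to \Inn(B)\cong B/Z(B)$ sending $w\mapsto \ad_{xw}$ has infinite-dimensional source and finite-dimensional target, so its kernel contains some $z\ne 0$; that is, $xz\in Z(B)$. Choosing $y$ with $[x,y]\ne 0$, one gets $z[x,y]=[xz,y]=0$, so the nonzero central element $z$ is a zero-divisor, contradicting primeness of $B$. This is exactly the ``find a central element killed by a commutator'' mechanism, and it uses the infinite-codimension hypothesis only through $\dim_\kk Z(B)=\infty$.
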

\begin{proof}
    Let $B = A/P$. Assume, for a contradiction, that $B$ is not commutative, so there exists $x \in B \setminus Z(B)$. Note that $\Inn(B) \cong B/Z(B)$ is finite-dimensional, since $Z(A)$ has finite codimension in $A$. It follows that there must be an element $z \in Z(B) \nonzero$ such that $xz \in Z(B)$. This is because $z$ is simply an element in the kernel of the map
    \begin{align*}
        Z(B) &\to \Inn(B) \\
        w &\mapsto \ad_{xw}.
    \end{align*}
    This kernel must be nonzero by comparing dimensions. Now, $x$ is non-central in $B$, so there exists $y \in B$ such that $[x,y] \neq 0$. Since $xz$ is central, it follows that $[xz,y] = 0$, which gives $z[x,y] = 0$. Hence, $z$ is a zero-divisor. But $B$ is a prime ring, so $Z$ consists of regular elements in $B$. This is a contradiction, so the result follows.
\end{proof}

There is still the issue of $A/P$ being infinite-dimensional. To prove that a minimal prime ideal of infinite codimension exists under our assumptions, we will use the theory of rings satisfying a polynomial identity (\emph{PI rings} for short) -- see \cite{Rowen} or \cite[Chapter 13]{McConnellRobson} for an introduction to the subject. We will use the following standard facts about PI rings implicitly in the remainder of this paper.

\begin{lem}[{\cite[Lemma 13.1.7, Corollary 13.1.13]{McConnellRobson}}]
    Let $A$ and $B$ be $\kk$-algebras.
    \begin{enumerate}
        \item If $A$ is PI and $B \subseteq A$, then $B$ is also PI.
        \item If $A$ and $B$ are PI, then $A \times B$ is PI.
        \item If $A$ is a finitely generated module over $Z(A)$, then $A$ is PI.
    \end{enumerate}
\end{lem}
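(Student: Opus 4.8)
The plan is to recall the three classical arguments; all are elementary. Throughout I take ``$R$ is PI'' to mean that $R$ satisfies a monic polynomial identity, i.e.\ there is a nonzero $f \in \kk\ang{x_1, x_2, \dots}$ with some monomial of maximal degree occurring with coefficient $1$, such that $f$ vanishes identically on $R$. Part (1) is then immediate: if $f$ is such an identity for $A$ and $B \subseteq A$ is a $\kk$-subalgebra, then $f$ evaluated on any tuple of elements of $B$ is still $0$, so the same $f$ witnesses that $B$ is PI.

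For part (3), write $A = \sum_{i=1}^{n} Z(A)\,a_i$ with $a_1,\dots,a_n$ spanning $A$ over its center, and show that $A$ satisfies the standard identity $s_{n+1}(x_1,\dots,x_{n+1}) = \sum_{\sigma \in S_{n+1}} \operatorname{sgn}(\sigma)\,x_{\sigma(1)}\cdots x_{\sigma(n+1)}$. This polynomial is multilinear, is monic (the monomial $x_1 \cdots x_{n+1}$ occurs with coefficient $1$), and --- by the usual argument of pairing each $\sigma$ with $(i\,j)\sigma$ for a transposition $(i\,j)$ --- vanishes whenever two of its arguments coincide. Given $r_1,\dots,r_{n+1} \in A$, expand each $r_k = \sum_i c_{ki}a_i$ with $c_{ki} \in Z(A)$; using multilinearity together with the centrality of the $c_{ki}$, one can write $s_{n+1}(r_1,\dots,r_{n+1})$ as a $Z(A)$-linear combination of the elements $s_{n+1}(a_{i_1},\dots,a_{i_{n+1}})$ with $i_1,\dots,i_{n+1} \in \{1,\dots,n\}$. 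By pigeonhole each such tuple repeats an index, so each of these terms is $0$, and hence $s_{n+1}$ is an identity for $A$.

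For part (2), I would use that evaluation of a noncommutative polynomial on a direct product $A \times B$ is carried out componentwise. Pick a monic identity $f$ for $A$ and a monic identity $g$ for $B$, chosen to involve disjoint sets of free variables. Then $f$ evaluated on any tuple from $A \times B$ lies in $0 \times B$, and $g$ evaluated on any tuple from $A \times B$ lies in $A \times 0$; since $(0 \times B)(A \times 0) = 0$, the product $fg$ vanishes identically on $A \times B$. As the free algebra $\kk\ang{x_1, x_2, \dots}$ is a domain, $fg \neq 0$, and because $f$ and $g$ use disjoint variables a monomial of $fg$ of maximal degree is the product of a maximal-degree monomial of $f$ with one of $g$, so it occurs with coefficient $1$. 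Thus $fg$ is a monic identity for $A \times B$.

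I do not expect any real obstacle: each part is a direct substitution or a pigeonhole count. The one point to keep in mind is that the free associative algebra is a domain, so that a product of nonzero (respectively monic) polynomial identities is again nonzero (respectively monic); this is exactly what makes the construction in part (2) work.
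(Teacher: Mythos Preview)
The paper does not prove this lemma at all; it simply cites \cite[Lemma 13.1.7, Corollary 13.1.13]{McConnellRobson} and moves on. Your three arguments are the standard elementary proofs and are correct. One minor remark on part (2): since you already observe that $\kk\ang{x_1,x_2,\dots}$ is a domain, the product $fg$ is automatically nonzero, so over a field the ``monic'' bookkeeping is not strictly needed (you can always scale a nonzero identity so that some coefficient is $1$); but tracking it does no harm.
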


Since we are assuming that $A/Z(A)$ is finite-dimensional, certainly $A$ is a finitely generated module over $Z(A)$, so $A$ is PI. Using this, it will follow that such a minimal prime ideal $P$ exists.

The final hurdle to overcome is that $\g$ must be nonzero. This presents another difficulty, because we cannot use the fact that minimal primes are associated, as we did in the commutative case. As we prove next, it is possible to overcome this issue.

\begin{prop}\label{prop:finite over center}
    Let $A$ be an infinite-dimensional affine algebra such that $Z(A)$ has finite codimension in $A$. Then $U(\Der(A))$ is not noetherian.
\end{prop}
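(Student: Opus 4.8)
The plan is to mimic the structure of the commutative case (Proposition \ref{prop:commutative}), reducing to a quotient $A/P$ by a minimal prime of infinite codimension, but replacing the role of "submodule-subalgebra" by the weaker notion from Remark \ref{rem:submodule-subalgebra}, using the finite codimension of $Z(A)$ in $A$. First I would establish the existence of a minimal prime $P$ of $A$ with $\dim_\kk(A/P) = \infty$. Since $A$ is a finitely generated module over $Z(A)$, it is PI, and in fact $A$ has finitely many minimal primes $P_1, \dots, P_n$ with $P_1 \cap \cdots \cap P_n$ nilpotent; hence the diagonal map $A \hookrightarrow \prod_i A/P_i$ has nilpotent (in particular finite-dimensional, being a finitely generated ideal that is nilpotent — or one argues via the radical being finite-dimensional) kernel only if... — more carefully: if every $A/P_i$ were finite-dimensional then $\prod A/P_i$ is finite-dimensional and the nil radical $N = \sqrt{0}$ of $A$ satisfies $A/N$ finite-dimensional; since $A$ is noetherian, $N$ is finitely generated and nilpotent, so $N$ is finite-dimensional (each $N^k/N^{k+1}$ is a finitely generated module over the finite-dimensional algebra $A/N$, hence finite-dimensional), forcing $A$ finite-dimensional, a contradiction. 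So fix such a $P$.

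Next, by Lemma \ref{lem:prime quotient}, $B \coloneqq A/P$ is commutative, and it is an affine commutative domain of infinite $\kk$-dimension. By Proposition \ref{prop:minimal primes}, every derivation of $A$ preserves $P$, giving a Lie algebra homomorphism $\varphi \colon \Der(A) \to \Der(B)$, $d \mapsto \overline{d}$. Let $\g = \im(\varphi)$. The key structural observation is that, writing $C$ for the image of $Z(A)$ under the quotient map $A \to B$, the subalgebra $\g$ is a $C$-submodule of $\Der(B)$: for $z \in Z(A)$ and $d \in \Der(A)$ we have $zd \in \Der(A)$ and $\varphi(zd) = \overline{z}\,\overline{d}$. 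Since $Z(A)$ has finite codimension in $A$, the subalgebra $C$ has finite codimension in $B$; moreover $C$ is an affine $\kk$-subalgebra of $B$ (it is a homomorphic image of the affine algebra $Z(A)$, which is affine because $A$ is affine and module-finite over it). Thus, provided $\g \neq 0$, Remark \ref{rem:submodule-subalgebra} applies and shows $U(\g)$ is not noetherian; since $U(\Der(A))$ surjects onto $U(\g)$, we conclude $U(\Der(A))$ is not noetherian.

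It remains to show $\g \neq 0$, and this is the main obstacle, since unlike the commutative case we do not have at our disposal the fact that $P$ is an associated prime in a useful form, nor the explicit splitting $\tau$. The strategy is to produce a single nonzero derivation of $A$ that does not kill $P$ into itself trivially — equivalently, a derivation inducing a nonzero map on $B$. One natural source: because $A$ is module-finite over the affine domain-quotient-relevant center and $B = A/P$ is an infinite-dimensional affine commutative domain, $\Der(B) \neq 0$ (e.g. $B$ has transcendence degree $\geq 1$ over $\kk$, so admits a nonzero $\kk$-derivation), and one must lift a nonzero $\overline{d} \in \Der(B)$ to some $d \in \Der(A)$ with $\varphi(d) = \overline{d}$, or at least with $\varphi(d) \neq 0$. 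Lifting derivations along $A \twoheadrightarrow A/P$ is not automatic for noncommutative $A$, so I expect the actual argument to instead go through $Z(A)$: pick a nonzero $\kk$-derivation $\delta$ of the affine commutative domain $\overline{Z(A)} \subseteq B$ of Krull dimension $\geq 1$ (noting $Z(A)$ surjects onto a finite-codimension, hence still infinite-dimensional and positive-dimensional, subalgebra of $B$), extend it to $Z(A)$, and then extend that to a derivation of $A$ using that $A$ is module-finite and generically separable over $Z(A)$ — or, more robustly, invoke that a derivation of a central subring extends to the ring after localizing and use a clearing-denominators argument to land back inside $\Der(A)$ with nonzero image in $\Der(B)$. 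Once a single nonzero element of $\g$ is in hand, the proof concludes as above. I anticipate the bulk of the technical work, and the part most likely to require care, is precisely this construction of a nonzero element of $\im(\varphi)$.
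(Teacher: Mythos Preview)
Your setup matches the paper's exactly: locate a minimal prime $P$ of infinite codimension (the paper does this by lifting a minimal prime of $Z(A)$ via \cite[Theorem 13.8.14]{McConnellRobson}, but your argument via the filtration $N \supseteq N^2 \supseteq \cdots$ also works), invoke Lemma~\ref{lem:prime quotient} to see that $A/P$ is commutative, form $\varphi\colon \Der(A) \to \Der(A/P)$, and observe that $\g = \im(\varphi)$ is a module over the finite-codimension affine subalgebra $Z = \text{image of } Z(A)$, so that Remark~\ref{rem:submodule-subalgebra} finishes the proof once $\g \neq 0$. You also correctly identify that the entire difficulty lies in establishing $\g \neq 0$.

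Where your proposal diverges from the paper, and where the genuine gap lies, is the strategy for this last point. You propose to lift a nonzero derivation from $B = A/P$ (or from $Z(A)$) back to $A$, appealing to module-finiteness, ``generic separability'', and clearing denominators. None of this is carried out, and the obstruction is real: extending a derivation of $Z(A)$ to $A$ is not automatic even when $A$ is module-finite over $Z(A)$, and you supply no mechanism. The paper does \emph{not} attempt a direct lift. Instead it runs the entire argument by contradiction: assuming $U(\Der(A))$ noetherian forces (via Remark~\ref{rem:submodule-subalgebra}) $\g = 0$, i.e.\ $d(A) \subseteq P$ for every $d \in \Der(A)$, and it is \emph{this} that is shown to be absurd. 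The contradiction comes from a noetherian induction: among ideals $J \subseteq P$ for which $\Der(A/J) \to \Der(A/P)$ has zero image, take $J$ maximal (using that $A$ is noetherian), and set $B = A/J$, $Q = P/J$. Then $\psi\colon\Der(B) \to \Der(B/Q)$ is zero, but $\Der(B/I) \to \Der(B/Q)$ is nonzero for every nonzero ideal $I \subseteq Q$. One now manufactures a central element $u \in Z(B)\setminus Q$ annihilating a nonzero ideal $I = BqB \subseteq Q$ (with $q$ a commutator; $u$ arises from the finite-codimension kernel of $z \mapsto xz \bmod Z(B)$, exploiting $\dim_\kk(B/Z(B)) < \infty$). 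This $u$ allows one to transport a derivation $d$ of $B/I$ with nonzero image in $\Der(B/Q)$ to a genuine derivation $d'$ of $B$, by the same annihilator trick as in Proposition~\ref{prop:commutative} but one level down. Since $\psi(d') = 0$ by the defining property of $B$, one obtains $uw \in Q$ with $u,w \notin Q$, contradicting primality of $Q$.

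In short, the missing idea is: do not try to lift along $A \twoheadrightarrow A/P$ directly; instead pass to a maximal intermediate quotient where an annihilator-based lift becomes available, and let maximality collide with primality.
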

\begin{proof}
    We have $\kk \subseteq Z(A) \subseteq A$, where $A$ is finitely generated as a $\kk$-algebra and as a $Z(A)$-module. By the Artin--Tate lemma \cite[Lemma 13.9.10]{McConnellRobson}, $Z(A)$ is finitely generated as a $\kk$-algebra and $A$ is noetherian. Furthermore, $A$ is PI since it is a finitely generated $Z(A)$-module.

    Assume, for a contradiction, that $U(\Der(A))$ is noetherian. By Proposition \ref{prop:commutative}, it follows that $Z(A) \neq A$. Since $Z(A)$ is affine and $\dim_\kk(Z(A)) = \infty$, there exists a minimal prime $\mathfrak{p}$ of $Z(A)$ such that $\dim_\kk(Z(A)/\mathfrak{p}) = \infty$. Now, $A$ is PI, so we can apply \cite[Theorem 13.8.14]{McConnellRobson} to deduce that there is a minimal prime $P$ of $A$ such that $P \cap Z(A) = \mathfrak{p}$. Note that
    $$\dim_\kk(A/P) \geq \dim_\kk((Z(A) + P)/P) = \dim_\kk(Z(A)/\mf{p}) = \infty.$$
    By Lemma \ref{lem:prime quotient}, the quotient $A/P$ is commutative. Let $Z$ be the image of $Z(A)$ in $A/P$. Since $\dim_\kk(A/Z(A)) < \infty$, it follows that $Z$ has finite codimension in $A/P$.
    
    By Proposition \ref{prop:minimal primes}, derivations of $A$ preserve $P$. Therefore, we get a map
    $$\varphi \colon \Der(A) \to \Der(A/P)$$
    defined as was done in the proof of Proposition \ref{prop:commutative}. Let $\g = \im(\varphi)$. Since $\Der(A)$ surjects onto $\g$, it follows that $U(\g)$ is noetherian.
    
    Similarly to the proof of Proposition \ref{prop:commutative}, we can easily see that $\g$ is a $Z$-submodule of $\Der(A/P)$. Since $Z$ has finite codimension in $A/P$, it follows by the observation in Remark \ref{rem:submodule-subalgebra} that $\g$ must be zero. This implies that $d(A) \subseteq P$ for all $d \in \Der(A)$.

    Define
    $$X = \{J \ideal A \mid J \subseteq P \text{ and the map } \Der(A/J) \to \Der(A/P) \text{ has zero image}\}.$$
    By the above, $0 \in X$, so $X \neq \emptyset$. Since $A$ is noetherian, $X$ must have a maximal element $J$. Let $B = A/J$, and let $Q = P/J$ be the image of $P$ in $B$. Then $B$ has the property that the map
    $$\psi \colon \Der(B) \to \Der(B/Q) \cong \Der(A/P)$$
    is zero, but if $I$ is a nonzero ideal of $B$ contained in $Q$, then $\Der(B/I) \to \Der(B/Q)$ has nonzero image.

    Let $x \in B \setminus Z(B)$ and let $V = \{z \in Z(B) \mid xz \in Z(B)\}$. Then $V$ is a subspace of $Z(B)$ of finite codimension, since it is the kernel of the linear map
    \begin{align*}
        Z(B) &\to B/Z(B) \\
        z &\mapsto xz + Z(B).
    \end{align*}
    By definition of $V$, we have $Vx \subseteq Z(B)$. Since $x$ is not central in $B$, there exists $y \in B$ such that $[x,y] \neq 0$. Then $V[x,y] = 0$, since $z[x,y] = [xz,y] = 0$ for all $z \in V$. Let $U_0 = \{z \in Z(B) \mid z[x,y] = 0\}$. Note that $V \subseteq U_0$, so $U_0$ is an ideal of $Z(B)$ of finite codimension.
    
    Since  $B/Q \cong A/P$ is commutative, $Q$ must contain all commutators in $B$, so $q \coloneqq [x,y] \in Q \nonzero$. Letting $U = BU_0$, we see that $U$ is an ideal of $B$ of finite codimension, since $U_0 \subseteq U$. By construction, $Uq = qU = 0$. Let $I = BqB \subseteq Q$ be the ideal of $B$ generated by $q$. Note that $U_0$ has finite codimension in $B$, while $Q$ does not. Therefore, there exists $u \in U_0 \subseteq Z(B)$ such that $u \notin Q$, and thus $uI = Iu = 0$.
    
    By construction, the natural map $\eta \colon \Der(B/I) \to \Der(B/Q)$ is nonzero. Let $d \in \Der(B/I)$ such that $\eta(d) \neq 0$. Thus, there exists $b \in B$ such that $d(b + I) = w + I$ for some $w \in B \setminus Q$. Define $d' \colon B \to B$ as follows: if $d(r + I) = s + I$, where $r,s \in B$, then we define $d'(r) = su$. It can easily be checked that $d'$ is a well-defined derivation of $B$.

    By construction, we have $\psi(d') = 0$, in other words, $d'(B) \subseteq Q$. In particular, $d'(b) \in Q$. By definition of $d'$, we have $d'(b) = wu$, so it follows that $wu \in Q$. Hence, $Bwu \subseteq Q$. Since $u \in Z(B)$, we therefore have $uBw \subseteq Q$. However, $u,w \notin Q$, so this contradicts the primality of $Q$.
\end{proof}

Having considered the case where $A$ is very close to being commutative, we move on to the ``next most commutative'' case: when $Z(A)$ has infinite codimension in $A$, but $A$ is still finitely generated as a module over $Z(A)$. In this case, $\Inn(A)$ is infinite-dimensional, so we prove that $U(\Inn(A))$ is not noetherian. As we will see, having access to many inner derivations will greatly simplify the proof.

\begin{lem}\label{lem:finite over center}
    Let $A$ be a finitely generated $\kk$-algebra such that $\dim_\kk(A/Z(A)) = \infty$ and $A$ is a finitely generated module over $Z(A)$. Then $U(\Inn(A))$ is not noetherian. Consequently, $U(\Der(A))$ is not noetherian.
\end{lem}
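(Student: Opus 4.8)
The plan is to locate an infinite-dimensional abelian Lie subalgebra inside $\Inn(A)\cong A/Z(A)$ and then apply Proposition \ref{prop:non-noetherian}. Once $U(\Inn(A))$ is shown to be non-noetherian, the ``consequently'' is immediate: $\Inn(A)$ is a Lie subalgebra of $\Der(A)$, so Proposition \refnum{prop:non-noetherian}{item:subalgebra} upgrades this to non-noetherianity of $U(\Der(A))$.

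The mechanism for producing the abelian subalgebra is as follows. Since $A$ is finitely generated as a $Z(A)$-module and $Z(A)$ is a $Z(A)$-submodule of $A$, the quotient $M \coloneqq A/Z(A)$ is a finitely generated $Z(A)$-module; fix generators $\bar a_1,\dots,\bar a_k$ with $a_i\in A$. The key observation is that for any $a\in A$ the cyclic $Z(A)$-submodule $Z(A)\bar a\subseteq M$ is an \emph{abelian Lie subalgebra} of $\Inn(A)$: it is a $\kk$-subspace, and for central $r,s$ we have $[ra,sa]=rsa^2-sra^2=0$ in $A$, hence $[r\bar a,s\bar a]=0$ in $A/Z(A)$; moreover $Z(A)\bar a\cong Z(A)/\Ann_{Z(A)}(\bar a)$ as $\kk$-vector spaces. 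This is the one genuinely noncommutative input, and it is exactly where having many inner derivations helps: the potential worry that one cannot find large commuting families of inner derivations simply does not arise, because all central multiples of a single element commute. (Note this step only uses that $A$ is a finitely generated $Z(A)$-module, not that it is affine.)

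With this in hand the rest is pure module theory, and I do not anticipate any serious obstacle. Suppose for contradiction that $U(\Inn(A))$ is noetherian. By Proposition \refnum{prop:non-noetherian}{item:abelian subalgebra} every abelian Lie subalgebra of $\Inn(A)$ is finite-dimensional, so each $Z(A)/\Ann_{Z(A)}(\bar a_i)$ is finite-dimensional, i.e.\ each $\Ann_{Z(A)}(\bar a_i)$ has finite codimension in $Z(A)$. Then $\Ann_{Z(A)}(M)=\bigcap_{i=1}^{k}\Ann_{Z(A)}(\bar a_i)$ has finite codimension in $Z(A)$ as well, being a finite intersection of finite-codimension subspaces. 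Hence $Z(A)/\Ann_{Z(A)}(M)$ is a finite-dimensional $\kk$-algebra over which the module $M$ is finitely generated, forcing $\dim_\kk M<\infty$ and contradicting $\dim_\kk(A/Z(A))=\infty$. Therefore $U(\Inn(A))$, and consequently $U(\Der(A))$, is not noetherian. The only thing to be careful about is keeping the distinction clear that $Z(A)\bar a$ is a Lie subalgebra of $\Inn(A)$ but not an associative subalgebra of $A/Z(A)$.
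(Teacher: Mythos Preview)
Your proof is correct and uses essentially the same idea as the paper: the cyclic $Z(A)$-submodules $Z(A)\bar a\subseteq A/Z(A)\cong\Inn(A)$ are abelian Lie subalgebras, and finitely many of them span $\Inn(A)$, so one must be infinite-dimensional. The paper concludes this last step by the bare pigeonhole observation that a finite sum of finite-dimensional subspaces is finite-dimensional, whereas you pass through $\Ann_{Z(A)}(M)=\bigcap_i\Ann_{Z(A)}(\bar a_i)$; this is a slightly longer way to reach the same contradiction but is entirely sound.
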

\begin{proof}
    Since $A$ is a finitely generated module over its center, we have
    $$A = Z(A)x_1 + \cdots + Z(A)x_n,$$
    for some $x_i \in A$. Take $\ad_a \in \Inn(A)$, where $a \in A$. Then
    $$a = z_1 x_1 + \cdots + z_n x_n$$
    for some $z_i \in Z(A)$. It follows that
    $$\ad_a = \ad_{z_1 x_1} + \cdots + \ad_{z_n x_n} = z_1\ad_{x_1} + \cdots + z_n\ad_{x_n},$$
    so
    $$\Inn(A) = Z(A)\ad_{x_1} + \cdots + Z(A)\ad_{x_n}.$$
    Now, $\Inn(A) \cong A/Z(A)$ is infinite-dimensional, so at least one $Z(A)\ad_{x_i}$ is infinite-dimensional. But $Z(A)\ad_{x_i}$ is an abelian Lie algebra, so $U(\Inn(A))$ is not noetherian by Proposition \refnum{prop:non-noetherian}{item:abelian subalgebra}.
\end{proof}

As in Lemma \ref{lem:finite over center}, the space of inner derivations is infinite-dimensional in all remaining cases of Theorem \ref{thm:main}. Thus, it remains to demonstrate that $U(\Inn(A))$ is not noetherian in what follows. 

Having proved Proposition \ref{prop:finite over center} and Lemma \ref{lem:finite over center}, we are left with the case where $A$ is not a finitely generated module over $Z(A)$. We split this in two sub-cases depending on whether $Z(A)$ is finite- or infinite-dimensional. We first assume that $Z(A)$ is finite-dimensional.

\begin{prop}\label{prop:finite-dimensional center}
    Let $A$ be an infinite-dimensional affine algebra with finite-dimensional center. Then $U(\Inn(A))$ is not noetherian. Consequently, $U(\Der(A))$ is not noetherian.
\end{prop}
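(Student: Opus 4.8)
The key observation is that $\Inn(A) \cong A/Z(A)$ as Lie algebras under the commutator bracket, and that this Lie algebra is infinite-dimensional because $\dim_\kk Z(A) < \infty$ while $\dim_\kk A = \infty$; I will identify $\Inn(A)$ with $A/Z(A)$ throughout. The plan is to show, in every case, that $\Inn(A)$ violates one of the necessary conditions for $U(\Inn(A))$ to be noetherian collected in Proposition \ref{prop:non-noetherian}: it will either contain an infinite-dimensional abelian subalgebra, or fail to have ACC on Lie subalgebras, or contain an infinite-dimensional solvable subalgebra. I split into three (overlapping) cases: $A$ is not algebraic over $\kk$; $A$ is not noetherian as a ring; and the residual case where $A$ is both noetherian and algebraic over $\kk$.

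First I would dispose of the case where $A$ is not algebraic over $\kk$: choosing $a\in A$ with $\kk[a]$ infinite-dimensional, the image of the commutative subalgebra $\kk[a]$ under $b\mapsto \ad_b$ is an abelian Lie subalgebra of $\Inn(A)$ isomorphic to $\kk[a]/(\kk[a]\cap Z(A))$, which is infinite-dimensional since $Z(A)$ is finite-dimensional; Proposition \refnum{prop:non-noetherian}{item:abelian subalgebra} then applies. Next, suppose $A$ is not left (or right) noetherian as a ring, say with an infinite strictly ascending chain $L_1\subsetneq L_2\subsetneq\cdots$ of left ideals. Every one-sided ideal $L$ of $A$ satisfies $[L,L]\subseteq L^2\subseteq L$, so it is a Lie subalgebra of $A$; hence $(L_n+Z(A))/Z(A)$ is an ascending chain of Lie subalgebras of $\Inn(A)$, and it cannot eventually stabilize, since otherwise some $L_{N+k}/L_N$ would embed into the finite-dimensional space $(L_N+Z(A))/L_N$, contradicting strict inclusion. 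Thus $\Inn(A)$ fails to have ACC on subalgebras and Proposition \refnum{prop:non-noetherian}{item:ACC on subalgebras} applies.

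It remains to treat the case that $A$ is noetherian and algebraic over $\kk$, and here I claim $\dim_\kk A<\infty$, which is absurd. Since $A$ is noetherian, its prime radical $N$ is nilpotent by Levitzki's theorem. If $N$ is infinite-dimensional, then $(N+Z(A))/Z(A)\cong N/(N\cap Z(A))$ is an infinite-dimensional Lie subalgebra of $\Inn(A)$ which is nilpotent, hence solvable, because $N$ is nilpotent as an associative algebra; Propositions \refnum{prop:non-noetherian}{item:solvable} and \refnum{prop:non-noetherian}{item:subalgebra} then finish this subcase. Otherwise $N$ is finite-dimensional, so $\bar A=A/N$ is an infinite-dimensional, semiprime, noetherian, affine algebra that is still algebraic over $\kk$. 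Because $\bar A$ is algebraic, every regular element of $\bar A$ is already invertible in $\bar A$ (take a polynomial of minimal degree annihilating it; its constant term must be nonzero by regularity), so the semisimple artinian classical ring of quotients given by Goldie's theorem is $\bar A$ itself. Thus $\bar A\cong M_{n_1}(D_1)\times\cdots\times M_{n_r}(D_r)$ for division rings $D_i$, each of which is affine (being a homomorphic image, up to a matrix corner, of the affine algebra $\bar A$) and algebraic over $\kk$. The classical fact that a finitely generated division $\kk$-algebra which is algebraic over $\kk$ is finite-dimensional then forces $\bar A$, and hence $A$, to be finite-dimensional, a contradiction.

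The main obstacle is the last case: ruling out an infinite-dimensional affine noetherian algebra that is algebraic over $\kk$. Its hardest ingredient is the finite-dimensionality of finitely generated algebraic division algebras; one could instead argue that such an $A$ is artinian and that its simple modules are finite-dimensional over $\kk$, but this rests on the same classical input. The routine but fiddly points are that one-sided ideals are Lie subalgebras, that passing to $A/Z(A)$ preserves infinite strict ascending chains (using $\dim_\kk Z(A)<\infty$), and that $\bar A$ above genuinely inherits semiprimeness, noetherianity, affineness, and algebraicity.
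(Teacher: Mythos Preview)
Your argument is correct up to the final step, but there is a genuine gap there. The ``classical fact'' you invoke --- that a finitely generated division $\kk$-algebra which is algebraic over $\kk$ must be finite-dimensional --- is precisely the Kurosh problem for division rings, and it is open. (The general Kurosh problem for associative algebras was settled negatively by Golod--Shafarevich; the restriction to division rings remains unresolved. It does hold under extra hypotheses, for instance for PI algebras via Kaplansky's theorem, but there is no PI assumption available here.) So while your reduction to the semisimple artinian quotient $A/N \cong \prod_i M_{n_i}(D_i)$ is fine, you cannot conclude that the $D_i$ are finite-dimensional. Your own closing remark that the alternative route through artinianity ``rests on the same classical input'' is exactly right: that input is not actually available.

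The paper follows your outline as far as the Artin--Wedderburn decomposition, but then sidesteps Kurosh entirely. It first proves, via a separate argument using finite generation of $A$ together with $\dim_\kk Z(A) < \infty$, that $Z(A/N)$ is finite-dimensional; hence each center $Z(D_i)$ is finite-dimensional over $\kk$. If some $D_i$ is nonetheless infinite-dimensional, a maximal subfield $K \subseteq D_i$ must be infinite-dimensional over $Z(D_i)$ \cite[Theorem~15.8]{Lam}, hence over $\kk$. Lifting $K$ back to $A$ yields a subspace $U$ with $[U,U] \subseteq N$, so the image of $U + N + Z(A)$ in $\Inn(A)$ is a Lie subalgebra with infinite-dimensional abelianization, contradicting Proposition~\ref{prop:non-noetherian}. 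In short, rather than trying to show each $D_i$ is small, the paper locates a large \emph{commutative} piece inside $D_i$ and feeds that back into the abelian-subalgebra criterion you already used in your first case.
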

\begin{proof}
    The map $\ad \colon A \surj \Inn(A)$ sending $x \in A$ to $\ad_x$ is a homomorphism of Lie algebras, where we view $A$ as a Lie algebra with commutator bracket. Since $\Inn(A) \cong A/Z(A)$, we may equivalently view this as the natural quotient map $A \surj A/Z(A)$. Assume, for a contradiction, that $U(\Inn(A))$ is noetherian.
    
    Suppose $A$ is not noetherian. If $I_1\subsetneqq I_2\subsetneqq \cdots $ is an infinite chain of left or right ideals of $A$, then their image under $\ad$ gives a non-terminating chain of Lie subalgebras of $\Inn(A)$, which contradicts Proposition \refnum{prop:non-noetherian}{item:ACC on subalgebras}. Therefore, $A$ is noetherian.
    
    We claim that each $a \in A$ is algebraic over $\kk$. The reason for this is that $\ad_{\kk[a]} = \spn\{\ad_{a^n} \mid n \in \NN\}$ is an abelian Lie subalgebra of $\Inn(A)$, so $\ad_{\kk[a]}$ must be finite-dimensional by Proposition \refnum{prop:non-noetherian}{item:abelian subalgebra}. Since $Z(A)$ is finite-dimensional, it follows that $\kk[a]$ is finite-dimensional, which proves the claim.
    
    Now let $N$ denote the prime radical of $A$, in other words, the intersection of the prime ideals of $A$. By Levitzki's theorem \cite[Theorem 2.3.7]{McConnellRobson}, $N$ is nilpotent. It follows that $\mf{n} \coloneqq \ad_N = \{\ad_x \mid x \in N\} \cong \frac{N + Z(A)}{Z(A)}$ is a nilpotent Lie subalgebra of $\Inn(A)$ and thus $N$ must be finite-dimensional by Proposition \refnum{prop:non-noetherian}{item:solvable}.
    
    We claim that $\dim_\kk(Z(A/N)) < \infty$. Assume, for a contradiction, that $Z(A/N)$ is infinite-dimensional. Let $q \colon A \surj A/N$ be the natural quotient map, and let
    $$W = q^{-1}(Z(A/N)) \subseteq A.$$
    Then $W$ is infinite-dimensional, and $[W,A] \subseteq N$. Now, $A$ is finitely generated, so there is a finite generating set $a_1,\ldots,a_n$ of $A$. Consider the maps $\nu_i = \restr{\ad_{a_i}}{W} \colon W \to N$. Note that $\dim_\kk(W/\ker(\nu_i)) = \dim_\kk(\im(\nu_i)) \leq \dim_\kk(N) < \infty$, so $\ker(\nu_i)$ has finite codimension in $W$ for all $i$. Therefore, the vector space
    $$W_0 = \bigcap_{i = 1}^n \ker(\nu_i) = \{w \in W \mid [w,a_i] = 0 \text{ for all } i\}$$
    has finite codimension in $W$. In particular, $W_0$ is infinite-dimensional. Now, $[W_0,A] = 0$ by definition of $W_0$, so $W_0 \subseteq Z(A)$, which contradicts the fact that $Z(A)$ is finite-dimensional. This proves the claim.
    
    Since every $x \in A/N$ is algebraic over $\kk$, we see that every regular element $x \in A/N$ is a unit. Therefore, $A/N$ is equal to its classical (left or right) quotient ring. Now, $A/N$ is noetherian, so it is a (left and right) Goldie ring. Since $A/N$ is semiprime, Goldie's theorem \cite[Theorem 6.15]{GoodearlWarfield} implies that $A/N$ is semisimple artinian. By the Artin--Wedderburn theorem, $A/N \cong \prod_{i=1}^s M_{n_i}(D_i)$ for some division rings $D_1,\ldots,D_s$ and $n_1,\ldots,n_s \ge 1$.
    
    It follows that $Z(A/N) \cong Z(D_1) \times \cdots \times Z(D_s)$, which is finite-dimensional by the above. On the other hand, $A$ is infinite-dimensional, so at least one $D_i$ is infinite-dimensional. Without loss of generality, we assume $D_1$ is infinite-dimensional. Let $K$ be a maximal subfield of $D_1$, which must be infinite-dimensional \cite[Theorem 15.8]{Lam}, and identify $\lambda \in K$ with 
    $(\lambda \cdot I_{n_1},0,0,\ldots,0) \in \prod_{i=1}^s M_{n_i}(D_i) \cong A/N$. Letting $U = q^{-1}(K)$ be the lift of $K$ in $A$, we see that $U$ is infinite-dimensional and $[U,U] \subseteq N$, since $K \subseteq A/N$ is commutative.
    
    Let $U_0 = U + N + Z(A)$, and let $\mf{u}_0 = \ad_{U_0} \cong U_0/Z(A)$. Then $[U_0,U_0] \subseteq N$, so $\mf{u}_0/\mf{n}$ is abelian and infinite-dimensional. It follows that $U(\mf{u}_0/\mf{n})$ is not noetherian by Proposition \refnum{prop:non-noetherian}{item:abelian subalgebra}. Now, $U(\mf{u}_0)$ surjects onto $U(\mf{u}_0/\mf{n})$, so $U(\mf{u}_0)$ is not noetherian. This contradicts Proposition \refnum{prop:non-noetherian}{item:subalgebra}, which concludes the proof.
\end{proof}

We are now left with the last remaining situation, where $A$ is not finite over its center but $Z(A)$ is still infinite-dimensional. Before we can prove this, we consider derivations of prime algebras.

\begin{lem}\label{lem:noncommutative prime}
    Let $A$ be a noncommutative infinite-dimensional prime affine algebra. Then $U(\Inn(A))$ is not noetherian, and thus $U(\Der(A))$ is not noetherian.
\end{lem}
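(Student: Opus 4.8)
The plan is to split the argument into two cases according to whether $Z(A)$ is finite- or infinite-dimensional over $\kk$. The split is natural: when the center is large, the abundance of inner derivations obtained by rescaling a single non-central element already produces an infinite-dimensional abelian subalgebra of $\Inn(A)$; when the center is small, the conclusion is already available from earlier in the paper.

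If $\dim_\kk Z(A) < \infty$, then $A$ is an infinite-dimensional affine algebra with finite-dimensional center, and I would simply invoke Proposition \ref{prop:finite-dimensional center} to conclude that $U(\Inn(A))$ is not noetherian (and hence neither is $U(\Der(A))$); note that primeness is not even used in this case.

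The remaining case is $\dim_\kk Z(A) = \infty$. Here I would exhibit an infinite-dimensional abelian Lie subalgebra of $\Inn(A)$ and finish with Proposition \refnum{prop:non-noetherian}{item:abelian subalgebra}. Since $A$ is noncommutative, fix $a \in A \setminus Z(A)$ and set $\mathfrak{h} = Z(A)\ad_a = \{z\ad_a \mid z \in Z(A)\} \subseteq \Inn(A)$. Because $z$ is central, $z\ad_a = \ad_{za}$, and since $[z_1 a, z_2 a] = (z_1 z_2 - z_2 z_1)a^2 = 0$ in $A$, the subspace $\mathfrak{h}$ is closed under the bracket and abelian. The step that actually uses the hypotheses is that $\mathfrak{h}$ is infinite-dimensional: the kernel of the $\kk$-linear map $Z(A) \to \Inn(A)$, $z \mapsto z\ad_a$, is $\{z \in Z(A) \mid za \in Z(A)\}$, and I would show it is zero via the standard fact that a nonzero central element $z$ of a prime ring is regular (if $zc = 0$ then the ideals $AzA$ and $AcA$ multiply to $0$, so primeness forces $c = 0$). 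Indeed, if $z \in Z(A)\setminus\{0\}$ and $za \in Z(A)$, then $z(ab - ba) = (za)b - b(za) = 0$ for every $b \in A$, so regularity of $z$ forces $ab = ba$ for all $b$, contradicting $a \notin Z(A)$. Hence $\mathfrak{h} \cong Z(A)$ is infinite-dimensional and $U(\Inn(A))$ is not noetherian. Finally, $\Inn(A)$ is a Lie subalgebra of $\Der(A)$, so $U(\Der(A))$ is not noetherian by Proposition \refnum{prop:non-noetherian}{item:subalgebra}.

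I do not expect a serious obstacle: the proof is short once one observes (i) that the finite-dimensional-center case is already covered by Proposition \ref{prop:finite-dimensional center}, and (ii) that primeness is exactly what makes the manifestly abelian subspace $Z(A)\ad_a$ infinite-dimensional inside $\Inn(A)$. The only points that require a little care are checking that $Z(A)\ad_a$ is genuinely a Lie subalgebra and not merely a subspace, and recalling that nonzero central elements of a prime ring are regular.
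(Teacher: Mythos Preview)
Your proposal is correct and follows essentially the same approach as the paper: both split on $\dim_\kk Z(A)$, invoke Proposition~\ref{prop:finite-dimensional center} when the center is finite-dimensional, and otherwise use the abelian subalgebra $Z(A)\ad_a\subseteq\Inn(A)$ together with regularity of nonzero central elements in a prime ring. The only cosmetic difference is that the paper argues by contradiction (assuming this abelian subalgebra is finite-dimensional and deriving a zero-divisor in $Z(A)$ as in Lemma~\ref{lem:prime quotient}), whereas you directly show the map $z\mapsto z\ad_a$ is injective; the content is identical.
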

\begin{proof}
    If $\dim_\kk(Z(A)) < \infty$, then we are done by Proposition \ref{prop:finite-dimensional center}. So, assume $Z(A)$ is infinite-dimensional. Since $A$ is not commutative, there exists $x \in A \setminus Z(A)$. It is easy to see that
    $$\mf{a} \coloneqq \{\ad_{xz} \mid z \in Z(A)\}$$
    is an abelian Lie subalgebra of $\Inn(A)$. Thus, if $\dim_\kk(\mf{a}) = \infty$, then we are done by Proposition \refnum{prop:non-noetherian}{item:abelian subalgebra}. Therefore, we may assume $\dim_\kk(\mf{a}) < \infty$.
    
    Since $Z(A)$ is infinite-dimensional and $\mf{a}$ is finite-dimensional, there must be an element $z \in Z(A) \nonzero$ such that $xz \in Z(A)$, because $z$ is an element in the kernel of the map
    \begin{align*}
        Z(A) &\to \mf{a} \\
        w &\mapsto \ad_{xw}.
    \end{align*}
    This kernel must be nonzero by the aforementioned dimension comparison. The proof now follows as in Lemma \ref{lem:prime quotient}.
\end{proof}

We are now ready to prove the final case of Theorem \ref{thm:main}.

\begin{prop}\label{prop:not finite over center}
    Let $A$ be an affine algebra such that $Z(A)$ is infinite-dimensional and $A$ is not a finitely generated module over $Z(A)$. Then $U(\Inn(A))$ is not noetherian. Consequently, $U(\Der(A))$ is not noetherian.
\end{prop}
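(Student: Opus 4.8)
The plan is to assume $U(\Inn(A))$ is noetherian and derive a contradiction, keeping in mind that $\Inn(A)\cong A/Z(A)$ is infinite-dimensional (otherwise $A$ would be a finitely generated $Z(A)$-module).

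\emph{Reduction to the prime case.} If $P$ is a minimal prime of $A$, then central elements of $A$ remain central modulo $P$, so $Z(A)$ maps into $Z(A/P)$ and there is a surjective Lie algebra homomorphism $\Inn(A)=A/Z(A)\twoheadrightarrow(A/P)/Z(A/P)=\Inn(A/P)$, hence a surjection $U(\Inn(A))\twoheadrightarrow U(\Inn(A/P))$. So $U(\Inn(A/P))$ is noetherian for every minimal prime $P$. If some $A/P$ were noncommutative and infinite-dimensional, Lemma~\ref{lem:noncommutative prime} would give a contradiction. Hence every minimal prime quotient of $A$ is commutative or finite-dimensional.

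\emph{Structural constraints.} By Proposition~\refnum{prop:non-noetherian}{item:abelian subalgebra}, for each $x\in A$ the abelian subalgebra $\{\ad_{xz}:z\in Z(A)\}$ of $\Inn(A)$ is finite-dimensional, i.e.\ $I_x\coloneqq\{z\in Z(A):xz\in Z(A)\}$ has finite codimension in $Z(A)$. Moreover, fixing a finite generating set of $A$ and letting $A_{\le d}$ be the span of its monomials of degree $\le d$, the Lie subalgebras $\h_d$ of $A$ generated by $Z(A)A_{\le d}$ form an ascending chain with $\bigcup_d\h_d=A$; applying $\ad$ and using Proposition~\refnum{prop:non-noetherian}{item:ACC on subalgebras}, this chain stabilizes, which gives $A=Z(A)\cdot L$ for some finitely generated Lie subalgebra $L$ of $A$ (necessarily infinite-dimensional, since otherwise $A$ would be module-finite over $Z(A)$).

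\emph{Endgame --- the main obstacle.} It remains to obtain a contradiction in the case where every minimal prime quotient is commutative or finite-dimensional, and the aim is to show that $\Inn(A)$ is solvable, which is impossible by Proposition~\refnum{prop:non-noetherian}{item:solvable} as $\Inn(A)$ is infinite-dimensional. Having removed the large noncommutative primes, $A$ should be close to commutative: writing $N$ for the prime radical of $A$, the commutative minimal quotients force $[A,A]$ into $N$ modulo the contribution of the finite-dimensional noncommutative minimal quotients, and then $\ad_N$ should be a solvable Lie ideal of $\Inn(A)$ with solvable quotient. The hard part is proving that $N$ is nilpotent --- equivalently, extracting enough noetherian/Goldie/PI structure from $A$ --- which is delicate because \emph{a priori} $A$ is neither noetherian nor PI, as $Z(A)$ can be badly behaved. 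This is where the finite codimension of the ideals $I_x$ and the decomposition $A=Z(A)\cdot L$ have to be brought in: to tame the commutative minimal primes of infinite codimension, and to rule out a residual non-solvable summand of $\Inn(A)$ coming from the finite-dimensional noncommutative quotients, presumably by passing back through the surjections $U(\Inn(A))\twoheadrightarrow U(\Inn(A/P))$.
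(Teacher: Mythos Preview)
Your proposal is an outline whose ``endgame'' is, by your own admission, incomplete: you identify that the difficulty is controlling the prime radical $N$ (nilpotence, finitely many minimal primes, etc.) when $A$ is neither noetherian nor PI, and you do not resolve it. The structural constraints you extract (finite codimension of $I_x$, the decomposition $A=Z(A)\cdot L$) do not by themselves yield nilpotence of $N$ or even finiteness of the set of minimal primes, and it is not clear how to push them through to a solvability statement for $\Inn(A)$.

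The paper sidesteps exactly this obstruction by a reduction you do not make: using Zorn's lemma, one passes to a quotient $B=A/I$ which is \emph{not} module-finite over its center but such that \emph{every} nonzero further quotient $B/J$ is. This single move supplies, for free, all the finiteness you are struggling to manufacture. If $B$ is not semiprime, take a nonzero nilpotent ideal $N$; then $B/N$ is module-finite over its center by construction, and a short solvability argument (via Proposition~\ref{prop:non-noetherian}) forces $B$ itself to be module-finite over $Z(B)$, a contradiction. If $B$ is semiprime, pick nonzero $J_1,J_2$ with $J_1J_2=0$; then $B$ embeds in $B/J_1\times B/J_2$, and both factors are module-finite over their centers, hence PI, so $B$ is PI. Now $B$ has finitely many minimal primes, and your observation that each prime quotient is commutative or finite-dimensional finishes the job. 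The point is that the ``just-infinite-over-center'' quotient $B$ is what produces the PI/noetherian structure you were trying to extract from $A$ directly; without it your approach does not close.
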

\begin{proof}
    For a finitely generated algebra $A$ that is not a finite module over its center, there is always an ideal $I$ that is maximal with respect to the property that $A/I$ is not a finite module over its center. Explicitly, let
    $$X = \{I \ideal A \mid A/I \text{ is not finitely generated as a module over } Z(A/I)\}.$$
    Certainly, $0 \in X$, so $X \neq \emptyset$.
    
    We now apply Zorn's lemma to show that $X$ has a maximal element. Let $C$ be a chain in $X$ and let $I = \bigcup_{J \in C} J$. Assume, for a contradiction, that $A/I$ is a finitely generated module over $Z(A/I)$. As in the proof of Proposition \ref{prop:finite over center}, it follows by the Artin--Tate lemma that $Z(A/I)$ is affine. Since $A$ and $Z(A/I)$ are affine, the condition that $A/I$ is finitely generated over $Z(A/I)$ can be described in finitely many equations in $A/I$. Thus, we can find an ideal $J \in C$ such that $A/J$ is a finitely generated $Z(A/J)$-module, a contradiction. It follows that $I \in X$, so $X$ has a maximal element.

    Let $I$ be a maximal element in $X$, and let $B = A/I$. By construction, $B/J$ is finitely generated over its center for all nonzero ideals $J$ of $B$. Since $\Inn(A) \to \Inn(A/I) = \Inn(B)$ is surjective, it suffices to show that $\Inn(B)$ has a non-noetherian enveloping algebra.
    
    Assume, for a contradiction, that $U(\Inn(B))$ is noetherian. By Proposition \ref{prop:finite-dimensional center}, it must be the case that $Z(B)$ is infinite-dimensional. Furthermore, $B$ cannot be prime, by Lemma \ref{lem:noncommutative prime}. Thus, there exist nonzero ideals $J_1$ and $J_2$ of $B$ such that $J_1 J_2 = 0$. Therefore, $(J_1 \cap J_2)^2 = 0$.

    Suppose $B$ is semiprime. It must be the case that $J_1 \cap J_2 = 0$, so we have an embedding
    \begin{align*}
        B &\inj B/J_1 \times B/J_2 \\
        b &\mapsto (b + J_1, b + J_2).
    \end{align*}
    The rings $B/J_1$ and $B/J_2$ are finitely generated over their centers by construction, so they are PI. Therefore, $B/J_1 \times B/J_2$ is also PI. Finally, we conclude that $B$ is PI since it is a subring of a PI ring.

    Now, $B$ is an affine semiprime PI ring, so $B$ has finitely many minimal primes \cite[Corollary 13.4.4]{McConnellRobson}. By Lemma \ref{lem:noncommutative prime}, every prime quotient of $B$ is either commutative or finite-dimensional, because the natural map
    \begin{align*}
        \Inn(B) &\to \Inn(B/P) \\
        \ad_b &\mapsto \ad_{b + P}
    \end{align*}
    is surjective. We therefore let $P_1,\ldots,P_n$ be the minimal primes of $B$ such that $B/P_i$ is commutative, and let $Q_1,\ldots,Q_m$ be the minimal primes of $B$ such that $B/Q_i$ is not commutative (hence finite-dimensional). Let $P = \bigcap_{i=1}^n P_i$ and $Q = \bigcap_{i=1}^m Q_i$. Since $B$ is semiprime, we have $P \cap Q = 0$. Let
    $$\varphi \colon B \to \prod_{i=1}^m B/Q_i$$
    be the natural map. Certainly, $\dim_\kk(\im(\varphi)) \leq \sum_{i=1}^n \dim_\kk(B/Q_i) < \infty$, so $Q = \ker(\varphi)$ has finite codimension in $B$. Let $x \in Q$ and $b \in B$. Then $[x,b] \in Q$. Furthermore, $B/P_i$ is commutative for all $i \in \{1,\ldots,n\}$, so $[x,b] \in P$. It follows that $[x,b] = 0$, since $P \cap Q = 0$, and thus $x \in Z(B)$. This implies that $Q \subseteq Z(B)$, so $\dim_\kk(B/Z(B)) \leq \dim_\kk(B/Q) < \infty$, a contradiction.
    
    Therefore, $B$ cannot be semiprime. It follows that $B$ has a nonzero nilpotent ideal $N$. Let $\varphi \colon B \surj B/N$ be the canonical surjection, and let $U = \varphi^{-1}(Z(B/N))$. Then $[U,U] \subseteq N$, so $U$ is a solvable Lie algebra, since $N$ is nilpotent. Hence, $\pi(U)$ is finite-dimensional by Proposition \ref{prop:non-noetherian}, where $\pi \colon B \surj \Inn(B) \cong B/Z(B)$ is the natural map. Thus, there is a finite-dimensional space $W \subseteq B$ such that $U = W + Z(B)$. In other words, $Z(B)$ has finite codimension in $U$.
    
    It follows that $\varphi(Z(B))$ has finite codimension in $\varphi(U) = Z(B/N)$. In particular, $Z(B/N)$ is a finite module over $\varphi(Z(B))$. Since $B/N$ is a finite module over $Z(B/N)$, it is also a finite module over $\varphi(Z(B))$. Letting $a_1,\ldots,a_n \in B$ be lifts of a set of generators for $B/N$ as a $\varphi(Z(B))$-module, every element of $B$ is expressible in the form
    $z_1 a_1+\cdots + z_n a_n + y$ with $z_i \in Z(B)$ and $y \in N$. But now $\pi(N)$ is a nilpotent subalgebra of $\Inn(B)$, so $\pi(N)$ is finite-dimensional by Proposition \refnum{prop:non-noetherian}{item:solvable}. Picking lifts $b_1,\ldots,b_m \in N$ of a basis for $\pi(N)$, we see that $y \in Z(B) + \kk b_1 + \cdots + \kk b_m$, so
    $$B = Z(B) + Z(B)a_1 + \cdots + Z(B)a_n + Z(B)b_1 + \cdots + Z(B)b_m,$$
    contradicting the assumption that $B$ is not a finite module over its center. This concludes the proof.
\end{proof}

We are finally ready to fully prove Theorem \ref{thm:main}.

\begin{proof}[Proof of Theorem \ref{thm:main}]
    If $A$ is commutative, then the result follows from Proposition \ref{prop:commutative}. If $A$ is not commutative and finitely generated as a module over its center, then the result follows from Proposition \ref{prop:finite over center} and Lemma \ref{lem:finite over center}. Finally, if $A$ is not finitely generated as a module over its center, then the result follows from Propositions \ref{prop:finite-dimensional center} and \ref{prop:not finite over center}.
\end{proof}

\section{Enveloping algebras of associative algebras}

It follows easily from the proof of Theorem \ref{thm:main} that enveloping algebras of infinite-dimensional associative algebras (viewed as Lie algebras with commutator bracket) are not noetherian.

\begin{cor}\label{cor:associative}
    Let $A$ be an infinite-dimensional $\kk$-algebra, and view $A$ as a Lie algebra with commutator bracket. Then $U(A)$ is not noetherian.
\end{cor}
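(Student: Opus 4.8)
The plan is to bootstrap from Theorem \ref{thm:main} (the affine case) via Proposition \refnum{prop:non-noetherian}{item:subalgebra}: if $U(A)$ were noetherian, then $U(B)$ would be noetherian for every subalgebra $B$ of $A$, since any associative subalgebra of $A$ is in particular a Lie subalgebra under the commutator bracket. So it suffices to locate, inside $A$, a subalgebra whose enveloping algebra we already know to be non-noetherian. I would split into two cases according to whether $A$ contains an infinite-dimensional finitely generated subalgebra.

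In the first case, suppose $B \subseteq A$ is finitely generated and infinite-dimensional. If $Z(B)$ is infinite-dimensional, then $Z(B)$ is an infinite-dimensional abelian Lie subalgebra of $A$, so $U(A)$ is not noetherian by Proposition \refnum{prop:non-noetherian}{item:abelian subalgebra}. Otherwise $\dim_\kk Z(B) < \infty$, hence $\dim_\kk(B/Z(B)) = \infty$, and applying the ``furthermore'' part of Theorem \ref{thm:main} to the affine algebra $B$ shows that $U(\Inn(B)) \cong U(B/Z(B))$ is not noetherian. Since $Z(B)$ is a (central) Lie ideal of $B$, the quotient map $B \twoheadrightarrow B/Z(B) \cong \Inn(B)$ is a surjection of Lie algebras and therefore induces a surjection $U(B) \twoheadrightarrow U(B/Z(B))$; as a quotient of a noetherian ring is noetherian, $U(B)$ — and hence $U(A)$ — is not noetherian.

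In the second case, every finitely generated subalgebra of $A$ is finite-dimensional. Since $\dim_\kk A = \infty$, I would inductively choose elements $b_1, b_2, \dots \in A$ with $b_{n+1}$ lying outside the subalgebra $B_n$ of $A$ generated by $b_1, \dots, b_n$; this is possible because each $B_n$ is finitely generated, hence finite-dimensional by hypothesis, hence a proper $\kk$-subspace of $A$. Then $B_1 \subsetneq B_2 \subsetneq \cdots$ is an infinite strictly ascending chain of Lie subalgebras of $A$, contradicting Proposition \refnum{prop:non-noetherian}{item:ACC on subalgebras}. This completes the argument, and in particular covers the commutative and non-finitely-generated situations.

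I do not expect a serious obstacle here: the substantive work is entirely contained in Theorem \ref{thm:main}, and the only genuinely new point — that $A$ need not be finitely generated — is dispatched cheaply in the second case, where the ascending chain of finite-dimensional subalgebras violates the ACC criterion outright. The one mild subtlety to keep in mind is to route through $\Inn(B)$ rather than $\Der(B)$ in the first case, since it is $U(\Inn(B))$ (equivalently $U(B/Z(B))$), not $U(\Der(B))$, that arises as a quotient of $U(B)$.
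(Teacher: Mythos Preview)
Your proof is correct and essentially follows the paper's approach: the paper splits on whether $A$ itself is affine (using ACC on subalgebras if not, and analysing $Z(A)$ via Proposition~\ref{prop:finite-dimensional center} if so), while you split on whether $A$ contains an infinite-dimensional affine subalgebra $B$ (analysing $Z(B)$ via Theorem~\ref{thm:main} if so, using ACC if not). The ideas and tools invoked are identical; only the case organisation differs slightly.
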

\begin{proof}
    If $A$ is not affine, then it has an infinite chain of $\kk$-subalgebras. This is also a chain of Lie subalgebras, so $U(A)$ is not noetherian by Proposition \refnum{prop:non-noetherian}{item:ACC on subalgebras}. So, we may assume that $A$ is affine.

    If $\dim_\kk(Z(A)) = \infty$, then $Z(A)$ is an infinite-dimensional abelian Lie subalgebra of $A$, so $U(A)$ is not noetherian by Proposition \refnum{prop:non-noetherian}{item:abelian subalgebra}.

    If $\dim_\kk(Z(A)) < \infty$, then $U(\Inn(A)) \cong U(A/Z(A))$ is not noetherian by Proposition \ref{prop:finite-dimensional center}. Since $U(A)$ surjects onto $U(A/Z(A))$, it follows that $U(A)$ is not noetherian.
\end{proof}


\begin{thebibliography}{Row80}

\bibitem[AM24]{AndruskiewitschMathieu}
Nicol\'as Andruskiewitsch and Olivier Mathieu, \emph{Noetherian enveloping
  algebras of simple graded {L}ie algebras}, 2024,
  arXiv:\href{https://arxiv.org/abs/2405.15235}{\texttt{2405.15235}}.

\bibitem[And23]{Andruskiewitsch}
Nicol\'as Andruskiewitsch, \emph{On infinite-dimensional {H}opf algebras},
  2023, arXiv:\href{https://arxiv.org/abs/2308.13120}{\texttt{2308.13120}}.

\bibitem[AS72]{AmayoStewart2}
Ralph~K. Amayo and Ian Stewart, \emph{Finitely generated {L}ie algebras}, J.
  London Math. Soc. (2) \textbf{5} (1972), 697--703.

\bibitem[AS74]{AmayoStewart}
\bysame, \emph{Infinite-dimensional {L}ie algebras}, Noordhoff International
  Publishing, Leyden, 1974.

\bibitem[BB23]{BellBuzaglo}
Jason Bell and Lucas Buzaglo, \emph{Maximal dimensional subalgebras of general
  {C}artan type {L}ie algebras}, 2023,
  arXiv:\href{https://arxiv.org/abs/2311.06001}{\texttt{2311.06001}}.

\bibitem[BG14]{BrownGilmartin}
Kenneth~A. Brown and Paul Gilmartin, \emph{Hopf algebras under finiteness
  conditions}, Palest. J. Math. \textbf{3} (2014), 356--365.

\bibitem[Bro07]{Brown}
Kenneth~A. Brown, \emph{Noetherian {H}opf algebras}, Turkish J. Math.
  \textbf{31} (2007), 7--23.

\bibitem[BS21]{BiswalSierra}
Rekha Biswal and Susan~J. Sierra, \emph{Ideals in enveloping algebras of affine
  {K}ac--{M}oody algebras}, 2021,
  arXiv:\href{https://arxiv.org/abs/2112.08334}{\texttt{2112.08334}}.

\bibitem[Buz23]{Buzaglo}
Lucas Buzaglo, \emph{Enveloping algebras of {K}richever--{N}ovikov algebras are
  not noetherian}, Algebr. Represent. Theory \textbf{26} (2023), no.~5,
  2085--2111.

\bibitem[Eis95]{Eisenbud}
David Eisenbud, \emph{Commutative algebra}, Graduate Texts in Mathematics, vol.
  150, Springer-Verlag, New York, 1995, With a view toward algebraic geometry.

\bibitem[Goo13]{Goodearl}
K.~R. Goodearl, \emph{Noetherian {H}opf algebras}, Glasg. Math. J. \textbf{55}
  (2013), no.~A, 75--87.

\bibitem[GW04]{GoodearlWarfield}
K.~R. Goodearl and R.~B. Warfield, Jr., \emph{An introduction to noncommutative
  {N}oetherian rings}, second ed., London Mathematical Society Student Texts,
  vol.~61, Cambridge University Press, Cambridge, 2004.

\bibitem[Lam01]{Lam}
T.~Y. Lam, \emph{A first course in noncommutative rings}, second ed., Graduate
  Texts in Mathematics, vol. 131, Springer-Verlag, New York, 2001.

\bibitem[MR01]{McConnellRobson}
J.~C. McConnell and J.~C. Robson, \emph{Noncommutative {N}oetherian rings},
  revised ed., Graduate Studies in Mathematics, vol.~30, American Mathematical
  Society, Providence, RI, 2001, With the cooperation of L. W. Small.

\bibitem[MS24]{MukhopadhyaySmith}
Alapan Mukhopadhyay and Karen~E. Smith, \emph{Some algebras with trivial rings
  of differential operators}, 2024,
  arXiv:\texttt{\href{https://arxiv.org/abs/2404.09184}{2404.09184}}.

\bibitem[PP18]{PenkovPetukhov}
Ivan Penkov and Alexey Petukhov, \emph{Primitive ideals of {${\rm
  U}(\mf{sl}(\infty))$}}, Bull. Lond. Math. Soc. \textbf{50} (2018), no.~3,
  435--448.

\bibitem[Row80]{Rowen}
Louis~Halle Rowen, \emph{Polynomial identities in ring theory}, Pure and
  Applied Mathematics, vol.~84, Academic Press, Inc. [Harcourt Brace
  Jovanovich, Publishers], New York-London, 1980.

\bibitem[SW14]{SierraWalton}
Susan~J. Sierra and Chelsea Walton, \emph{The universal enveloping algebra of
  the {W}itt algebra is not noetherian}, Adv. Math. \textbf{262} (2014),
  239--260.

\end{thebibliography}
\end{document}